\documentclass[12pt]{article}
\usepackage{latexsym,amsmath,amsthm,amssymb,amscd,amsfonts}
\usepackage[usenames]{color}

\setlength{\textwidth}{6.0in} \setlength{\evensidemargin}{0.25in}
\setlength{\oddsidemargin}{0.25in} \setlength{\textheight}{9.0in}
\setlength{\topmargin}{-0.5in} \setlength{\parskip}{2mm}
\setlength{\baselineskip}{1.7\baselineskip}

\newtheorem{lemma}{Lemma}[section]
\newtheorem{proposition}[lemma]{Proposition}
\newtheorem{remark}[lemma]{Remark}
\newtheorem{theorem}[lemma]{Theorem}
\newtheorem{definition}[lemma]{Definition}

\newtheorem{corollary}[lemma]{Corollary}

\newtheorem*{remark*}{Remark}
\newtheorem*{theorem*}{Theorem}

\def\cH{{\cal H}}
\def\H{\text{H}}
\def\R{{\mathbb R}}

\def\eps{\varepsilon}

\def\Ham{\text{Ham}}

\def\supp{\text{supp}}

\def\cD{\mathcal{D}}
\def\od{\overline{d}}

\def\cG{\mathcal{G}}
\def\cA{\mathcal{A}}
\def\cJ{\mathcal{J}}

\def\diam{\text{diam}}

\def\OHam{\overline{\text{Ham}}}
\def\Homeo{\text{Homeo}}
\def\Hameo{\text{Hameo}}
\def\FHomeo{\text{FHomeo}}
\def\osc{\text{osc}}

\newcommand{\floor}[1]{\left\lfloor #1 \right\rfloor}
\newcommand{\ceil}[1]{\left\lceil #1 \right\rceil}

\newcommand{\Id}{{{\mathchoice {\rm 1\mskip-4mu l} {\rm 1\mskip-4mu l}
      {\rm 1\mskip-4.5mu l} {\rm 1\mskip-5mu l}}}}

\makeatletter \@addtoreset {equation}{section}

\renewcommand\theequation
  {\ifnum \c@subsection>\z@ \arabic{section}.\arabic{subsection}.\arabic{equation}
  \else \arabic{section}.\arabic{equation} \fi}
\makeatother

\begin{document}

\title{On two remarkable groups of area-preserving homeomorphisms}

\author{Lev Buhovsky}


\date{\today}
\maketitle

\begin{abstract}
We prove that on a symplectic sphere, the group of Hamiltonian homeomorphisms in the sense of Oh and M\"uller is a proper normal subgroup of the group of finite energy Hamiltonian homeomorphisms. Moreover we detect infinite-dimensional flats inside the quotient of these groups endowed with the natural Hofer pseudo-metric. 
\end{abstract}


\section{Introduction}

Let $ (M,\omega) $ be a closed and connected symplectic surface. Consider the groups $ \Ham(M,\omega) \subset  \Homeo(M) $ of Hamiltonian diffeomorphisms of $ (M,\omega) $, and of all homeomorphisms of $ M $, respectively. The $ C^0 $ closure of $ \Ham(M,\omega) $ inside $ \Homeo(M) $ is denoted by $ \OHam(M,\omega) $, and it consists exactly of those orientation and area preserving homeomorphisms of $ (M,\omega) $ that are homotopic to the the identity and which belong to the kernel of the mass-flow homomorphism \cite{F}. In the genus $ 0 $ case, i.e. when $ M = S^2 $, the condition of vanishing of the mass-flow is redundant, and so $ \OHam(S^2) $ consists of all orientation and area preserving homeomorphisms. The group $ \OHam(M,\omega) $ can also be defined for a general $ M $, under additional compact support requirements. 

The question of Fathi \cite{F} asks whether $ \OHam(M,\omega) $ is a simple group, and it has been one of the major inspirations for the development of $ C^0 $ symplectic geometry. The Fathi question served as an important motivation for the influential work \cite{OM} of Oh and M\"uller which in particular introduced the notion of a continuous Hamiltonian flow on symplectic manifolds. In the $ 2 $-dimensional case, the group $ \Hameo(M,\omega) $ consisting of time-$1$ maps of these flows was conjectured \cite{OM} to be an example of a proper normal subgroup of $ \OHam(M,\omega) $. 

Recently, a number of breakthrough works addressed the Fathi question, where it was first solved in the case of a two-disc \cite{CHS-1}, then in the two-sphere case \cite{CHS-2,PS}, and finally for general surfaces of finite type and finite area \cite{CHMSS}. Moreover, the works \cite{CHS-2,PS} have largely contributed to Hofer geometry, in particular solving the Polterovich-Kapovich question. Powerful tools coming from Floer homology, embedded contact homology and periodic homology theories, were central in making that progress possible. The aim of the present article is to give an additional insight on that picture. Our approach is based on application of novel Floer-theoretic invariants from \cite{PS} (see also \cite{MS}), in combination with a soft approach relying on an idea due to Sikorav \cite{Si}.

%
%
%
%
%

Before discussing our results, we recall some of the relevant definitions. We refer the reader to Section \ref{section:Notation} for other definitions and notation that we use. 
Let $ (M,\omega) $ be a symplectic manifold. We denote by $ \Homeo(M) $ the group of all compactly supported homeomorphisms of $ M $. 
The $ C^0 $ convergence of a sequence $ \phi_k \in \Homeo(M) $ to some $ \phi \in \Homeo(M) $ always assumes that the supports of all $ \phi_k $ lie in some compact subset of $ M $, and is denoted by $ \phi = (C^0) \lim_{k \rightarrow \infty} \phi_k $. For a continuous compactly supported function $ H : M \rightarrow \R $, its $ L^\infty $ norm is denoted by $ \| H \| $. For a continuous compactly supported function $ H : M \times [0,1] \rightarrow \R $, its $ L^{(1,\infty)} $ norm is given by $$ \| H \|_{(1,\infty)} = \int_0^1 \| H_t \| \, dt ,$$
where $ H_t (\cdot) = H(\cdot,t) $. All Hamiltonian functions are assumed to be compactly supported. 

On a closed symplectic manifold $ (M,\omega) $, for any $ \phi \in \Ham(M,\omega) $, its Hofer norm is given by
$$ \| \phi \|_\H = \inf \| H \|_{(1,\infty)} ,$$
where $ H $ is a smooth and normalized Hamiltonian function such that $ \phi = \phi_H^1 $.
We denote by $ d_\H $ the Hofer distance on $ \Ham(M,\omega) $, that is, $ d_\H(\phi,\psi) = \| \phi^{-1} \psi \|_\H $.

Now we recall the definitions of the groups $ \OHam(M,\omega) $ (Hamiltonian homeomorphisms), $ \Hameo(M,\omega) $ (strong Hamiltonian homeomorphisms \cite{OM}) and $ \FHomeo(M,\omega) $ (finite energy Hamiltonian homeomorphisms \cite{CHS-1,CHS-2,CHMSS}).  

\begin{definition}
A homeomorphism $ \phi \in \Homeo(M) $ is a Hamiltonian homeomorphism if it is a $ C^0 $ limit of a sequence of Hamiltonian diffeomorphisms of $ M $. The set of all Hamiltonian homeomorphisms of $ M $ is denoted by $ \OHam(M,\omega) $.
\end{definition}

\begin{definition}
A continuous path $ (\phi^t)_{t \in [0,1]} $ of homeomorphisms of $ M $ is a {\em continuous Hamiltonian flow}, if there exists a continuous compactly supported function $ H : M \times [0,1] \rightarrow \R $, and a sequence $ H_k : M \times [0,1] \rightarrow \R $ of smooth Hamiltonian functions, such that:
\begin{itemize}
 \item The supports of $ H_k $ all belong to some compact subset of $ M $.
 \item $ \lim_{k \rightarrow \infty} \| H_k - H \|_{(1,\infty)} = 0 $.
 \item We have the $ C^0 $ convergence $ \phi^t = (C^0) \lim_{k \rightarrow \infty} \phi_{H_k}^t $, uniform in $ t \in [0,1] $.
\end{itemize}
In that case the time-$1$ map $ \phi^1 $ is called a strong Hamiltonian homeomorphism of $ M $, and the set of all such homeomorphisms is denoted by\footnote{In fact, the original definition 
of $ \Hameo(M,\omega) $ was through the notion of a {\em topological Hamiltonian path} \cite{OM}. However, as it was shown in \cite{Mu}, the two definitions are equivalent.}
 $ \Hameo(M,\omega) $.
\end{definition}

\begin{definition}
An element $ \phi \in \OHam(M,\omega) $ is a {\em finite energy homeomorphism} if there exists a sequence of smooth Hamiltonians $ H_k : M \times [0,1] \rightarrow \R $ such that 
$$ \phi = (C^0) \lim_{k \rightarrow \infty} \phi_{H_k}^1 , \,\,\,\, \text{and} \,\,\,\,\,\,  \| H_k \|_{(1,\infty)} \leqslant C ,$$ 
for some constant $ C > 0 $. The set of all finite energy Hamiltonian homeomorphisms of $ M $ is denoted by $ \FHomeo(M,\omega) $.
\end{definition}

It is well known \cite{OM,CHMSS} that $ \Hameo(M,\omega) \subset \FHomeo(M,\omega) $ are normal subgroups of $ \OHam(M,\omega) $. For symplectic surfaces $ (M,\omega) $ of finite type and area it was shown \cite{CHS-1,CHS-2,PS,CHMSS} that $ \FHomeo(M,\omega) $ is in fact a {\em proper} normal subgroup of $ \OHam(M,\omega) $. This answers the 
Fathi question to the negative. This of course also means that $ \Hameo(M,\omega) $ is a proper normal subgroup of $ \OHam(M,\omega) $, confirming the prediction of Oh and M\"uller.
However, a natural question remained whether $ \Hameo(M,\omega) $ and $ \FHomeo(M,\omega) $ are distinct groups. This question was communicated to us by V. Humili\`ere and S. Seyfaddini. Below we show a positive answer to the question in the case of $ S^2 $ (Corollary \ref{cor:main}).

Let $ (M,\omega) $ be a closed symplectic manifold. The group $ \FHomeo(M,\omega) $ naturally carries a Hofer like norm and the associated metric which we denote here by $ \| \cdot \|_\cH $ and $ d_\cH $ respectively \cite{CHS-2}. For any $ \phi \in \FHomeo(M,\omega) $, its norm $ \| \phi \|_\cH $ is defined as the minimal possible $ \liminf_{k \rightarrow \infty} \| \phi_k \|_\H $, where $ (\phi_k) $ is a sequence in $ \Ham(M,\omega) $ that $ C^0 $ converges to $ \phi $. It can be easily verified that the norm $ \| \cdot \|_\cH $ is invariant under conjugation by elements of $ \OHam(M,\omega) $. It is currently completely unknown whether the norm $ \| \cdot \|_\cH $ coincides with $ \| \cdot \|_\H $ on $ \Ham(M,\omega) $ (a question of Le Roux \cite{L}). 

Consider the symplectic two-sphere $ (S^2,\omega) $ of area $ 1 $. Our first result is:
\begin{theorem} \label{thm:main}
For every $ E > 0 $, there exists a continuous path $ (\varphi^t)_{t\in[0,1]} $ of homeomorphisms of $ S^2 $, such that:
\renewcommand\labelenumi{(\theenumi)}
\begin{enumerate}
 \item The flow $ (\varphi^t) $ is the uniform limit of a sequence of smooth Hamiltonian flows $ (\phi_{H_k}^t) $, where $ H_k \in C^\infty(S^2 \times [0,1]) $ is normalized and $ \| H_k \|_{(1,\infty)} \leqslant E $ for every $ k $. 
 \item For every sequence $ \psi_k \in \Ham(S^2) $ satisfying $ \varphi^1 = (C^0) \lim_{k\rightarrow\infty} \psi_k $, and for every $ \psi \in \Ham(S^2) $ we have 
 $ \liminf_{k \rightarrow \infty}  d_\H(\psi_k,\psi) \geqslant E $.
\end{enumerate}
Moreover, the flow $ (\varphi^t) $ can be chosen to be arbitrarily $ C^0 $-close to any given smooth Hamiltonian flow in $ \Ham(S^2) $ of Hofer's length $ \leqslant E $. 
\end{theorem}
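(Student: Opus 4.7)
The plan is to combine the soft construction attributed to Sikorav in the introduction with the Floer-theoretic spectral invariants of \cite{PS} (and the refinements of \cite{MS}).

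\emph{Construction of $\varphi^t$.} I would start with a smooth Hamiltonian $G$ whose flow $\phi_G^t$ is an arbitrarily good $C^0$-approximation of the prescribed smooth target flow and whose $L^{(1,\infty)}$-norm is $\leqslant E-\eps$ for a small $\eps>0$. Next, choose pairwise disjoint embedded topological disks $D_n \subset S^2$ with areas $a_n$ and diameters shrinking rapidly to zero. In each $D_n$, on a short disjoint time interval $I_n \subset [0,1]$, insert a smooth Hamiltonian $K_n$ supported in $D_n$ generating a twist of large rotation number $N_n$, with the pair $(a_n,N_n)$ tuned so that $\|K_n\|_{(1,\infty)} \leqslant \eps \cdot 2^{-n}$ while $N_n\to\infty$. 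The concatenated Hamiltonian then has total $L^{(1,\infty)}$-norm $\leqslant E$, and because $\diam(D_n)\to 0$ the associated smooth Hamiltonian flows form a $C^0$-Cauchy sequence whose uniform limit is a continuous Hamiltonian flow $\varphi^t$, necessarily $C^0$-close to $\phi_G^t$. This settles property (1) together with the $C^0$-closeness addendum.

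\emph{Hofer lower bound.} For property (2), I would invoke a family $\{\mu_\alpha\}$ of Hofer-Lipschitz spectral invariants on $\Ham(S^2)$ built from the Polterovich-Shelukhin link invariants of \cite{PS}. The desired features are: (i) each $\mu_\alpha$ is lower semi-continuous under $C^0$-limits of Hamiltonian diffeomorphisms of bounded Hofer energy, so that an extended value $\mu_\alpha(\varphi^1)$ is well defined and captures the cumulative contribution of the twists $K_n$; and (ii) the family is rich enough that for any prescribed $\psi\in\Ham(S^2)$ one can select $\alpha$ (depending on $\psi$) with $\mu_\alpha(\psi)\leqslant 0$ while $\mu_\alpha(\varphi^1)\geqslant E$. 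The Lipschitz bound $|\mu_\alpha(\psi_k)-\mu_\alpha(\psi)|\leqslant d_\H(\psi_k,\psi)$ combined with semi-continuity then gives $\liminf_k d_\H(\psi_k,\psi) \geqslant \mu_\alpha(\varphi^1)-\mu_\alpha(\psi) \geqslant E$ for every $\psi_k\to\varphi^1$ in $\Ham(S^2)$.

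\emph{Main obstacle.} The delicate step is to simultaneously realize (i) and (ii): the twist parameters $(a_n,N_n)$ must be tuned so that the cumulative contribution to the spectral invariant survives the $C^0$-limit and totals at least $E$, and the family of link invariants must be rich enough to accommodate, for each prescribed $\psi$, an $\alpha$ with $\mu_\alpha(\psi)\leqslant 0$ while $\mu_\alpha(\varphi^1)$ retains the full twist contribution. This is precisely where the richness of the Floer-theoretic invariants of \cite{PS} enters, which also powers the infinite-dimensional flats in the quotient announced in the abstract; concretely, the disks $D_n$ should be arranged so that many auxiliary link configurations can be placed in the complementary regions, allowing the twist contributions to be detected while the value $\mu_\alpha(\psi)$ on any prescribed smooth Hamiltonian diffeomorphism $\psi$ is neutralized by an appropriate choice of $\alpha$.
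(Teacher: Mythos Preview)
Your construction has a genuine gap that makes property (2) unreachable. You insert twists $K_n$ with $\sum_n \|K_n\|_{(1,\infty)} \leqslant \eps$, so for the approximating sequence $\phi_{H_k}^1$ (concatenation of $G$ with $K_1,\ldots,K_k$) one has $d_\H(\phi_{H_k}^1,\phi_G^1)\leqslant \eps$ for every $k$. Any Hofer-Lipschitz invariant $\mu_\alpha$ that is $C^0$-continuous then satisfies $|\mu_\alpha(\varphi^1)-\mu_\alpha(\phi_G^1)|\leqslant C\eps$. Taking $\psi=\phi_G^1$ (a perfectly legitimate choice in (2)) shows that your scheme can never produce $\mu_\alpha(\varphi^1)-\mu_\alpha(\psi)\geqslant E$; the large rotation numbers $N_n$ are invisible to any Hofer-Lipschitz functional once the Hofer cost of producing them has been made small. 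In short, Hofer-small perturbations cannot create a Hofer-large spectral gap.

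The paper's mechanism is essentially the opposite of yours. At each inductive step it adds a perturbation $K$ (resp.\ $K'$) that is Hofer-\emph{large} --- of oscillation close to $2E$ --- but is tailored to $H$ so that $K\sharp H$ still has $L^\infty$ norm $<E$ ($K$ fills each small disc up to $\pm E$ depending on the local value of $H$). Sikorav's lemma is then used not for the $C^0$ construction but to show that the spread-out $\phi_{K'}^1(\phi_K^1)^{-1}$ is Hofer-close to a version $\phi_{K_0'}^1(\phi_{K_0}^1)^{-1}$ concentrated in a single disc, where the Lagrangian control property computes $\sigma_m$ explicitly and yields $|\tau_m(H'\sharp F)|>2E-\eps$. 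Finally, the quantifier ``for every $\psi$'' is handled not by choosing $\alpha$ after $\psi$, but by running the construction against a countable Hofer-dense family $(F_j)$ and using $C^0$-continuity of $\tau_{m_j}$ to preserve earlier estimates at later steps. Your proposal misses both of these ingredients: the perturbation must be Hofer-large yet norm-compatible with $H$, and Sikorav's compression is what converts many small-disc twists into a single large spectral value.
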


If $ (\varphi^t) $ is a path of homeomorphisms of $ S^2 $ given by Theorem \ref{thm:main}, then by the item $(1)$ from the theorem, the time-$1$ map $ \varphi = \varphi^1 $ is an element of $ \FHomeo(S^2) $. Both $(1)$ and $(2)$ imply that on the one hand we have $ \| \varphi \|_{\cH} = E $, but on the other hand the $ d_\cH $-distance of $ \varphi $ to any element of $ \Ham(S^2) $ is greater than or equal to $ E $. The latter property yields $ \varphi \notin \Hameo(S^2) $. Indeed, assuming the contrary, that is $ \varphi \in \Hameo(S^2) $, we obtain a sequence $ H_k' : S^2 \times [0,1] \rightarrow \R $ of smooth Hamiltonian functions converging in the $ L^{(1,\infty)} $ norm to a continuous function $ H' : S^2 \times [0,1] \rightarrow \R $, such that in particular we have $ \varphi = (C^0) \lim_{k\rightarrow\infty} \phi_{H_k'}^1 $. But then we come to a contradiction with the item (2) of Theorem \ref{thm:main}, since we can consider the sequence $ \psi_k := \phi_{H_k'}^1 $ and then take $ \psi := \phi_{H_l'}^1 $ for $ l $ large enough. Thus we obtain:

\begin{corollary} \label{cor:main}
$ \Hameo(S^2) \neq \FHomeo(S^2) $.
\end{corollary}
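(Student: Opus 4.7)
The plan is to derive Corollary~\ref{cor:main} from Theorem~\ref{thm:main} by a short argument by contradiction. I would first fix any $E > 0$ and let $(\varphi^t)_{t \in [0,1]}$ be a path of homeomorphisms of $S^2$ furnished by Theorem~\ref{thm:main}, with time-$1$ map $\varphi := \varphi^1$. Item~$(1)$ of the theorem exhibits $\varphi$ as the $C^0$-limit of time-$1$ maps $\phi_{H_k}^1$ of smooth normalized Hamiltonians with $\|H_k\|_{(1,\infty)} \leqslant E$, so $\varphi \in \FHomeo(S^2)$ directly from the definition.

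The heart of the argument is to show $\varphi \notin \Hameo(S^2)$. Suppose for contradiction that $\varphi$ is a strong Hamiltonian homeomorphism. By the definition of $\Hameo(S^2)$ there exist smooth Hamiltonians $H_k' : S^2 \times [0,1] \to \R$ converging in $L^{(1,\infty)}$ to a continuous function $H'$, such that the time-$1$ maps $\psi_k := \phi_{H_k'}^1 \in \Ham(S^2)$ satisfy $\varphi = (C^0)\lim_{k \to \infty} \psi_k$. In particular, $(\psi_k)$ is an explicit sequence in $\Ham(S^2)$ that is $C^0$-close to $\varphi$, so item~$(2)$ of Theorem~\ref{thm:main} applies to it.

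The key step is then a Cauchy comparison in the Hofer metric. Since $(H_k')$ is $L^{(1,\infty)}$-Cauchy and the assignment $F \mapsto \phi_F^1$ is Lipschitz from $L^{(1,\infty)}$ into $(\Ham(S^2), d_\H)$ (a standard fact: the reversed flow trick gives $d_\H(\phi_F^1, \phi_G^1) \leqslant \|F - G\|_{(1,\infty)}$), one has $d_\H(\psi_k, \psi_l) \leqslant \|H_k' - H_l'\|_{(1,\infty)}$ for all $k,l$. I would fix $l$ large enough that $\|H' - H_l'\|_{(1,\infty)} < E$, and take $\psi := \phi_{H_l'}^1 \in \Ham(S^2)$. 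Then
$$
\liminf_{k \to \infty} d_\H(\psi_k, \psi) \;\leqslant\; \lim_{k \to \infty} \|H_k' - H_l'\|_{(1,\infty)} \;=\; \|H' - H_l'\|_{(1,\infty)} \;<\; E,
$$
which directly contradicts item~$(2)$ of Theorem~\ref{thm:main}. Hence $\varphi \in \FHomeo(S^2) \setminus \Hameo(S^2)$, which is exactly the corollary.

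I do not see a genuine obstacle in this derivation: the corollary is little more than an unpacking of the definition of $\Hameo(S^2)$ against the quantitative lower bound $(2)$ of Theorem~\ref{thm:main}. All the real content is concentrated in the construction of the path $(\varphi^t)$ in Theorem~\ref{thm:main}, whose proof presumably relies on the novel Floer-theoretic invariants of Polterovich--Shelukhin combined with the Sikorav-type soft construction advertised in the introduction; once that theorem is in hand, the only ingredient used here is the elementary $L^{(1,\infty)}$-to-Hofer continuity of $H \mapsto \phi_H^1$.
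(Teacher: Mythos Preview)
Your argument is correct and is essentially the same as the paper's: the paper also fixes $\varphi=\varphi^1$ from Theorem~\ref{thm:main}, assumes $\varphi\in\Hameo(S^2)$, takes the approximating sequence $\psi_k=\phi_{H_k'}^1$ with $H_k'\to H'$ in $L^{(1,\infty)}$, and sets $\psi=\phi_{H_l'}^1$ for $l$ large to contradict item~$(2)$. The only difference is that you spell out the elementary $L^{(1,\infty)}$-to-Hofer Lipschitz estimate $d_\H(\phi_F^1,\phi_G^1)\leqslant\|F-G\|_{(1,\infty)}$ explicitly, which the paper leaves implicit.
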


This answers the mentioned above question. $ \Hameo(S^2) $ is a normal subgroup of $ \FHomeo(S^2) $, and a next possible task is to check how large the quotient group $ \FHomeo(S^2) / \Hameo(S^2) $ is, and to understand its algebraic structure. Moreover, the norm $ \| \cdot \|_\cH $ naturally descends from $ \FHomeo(S^2) $ to the pseudo-norm $ \| \cdot \|_\cH $ on the quotient $ \cG = \FHomeo(S^2) / \Hameo(S^2) $. It might be interesting to understand that picture better. These questions will be discussed in Section \ref{section:The-quotient} below.

In our proof of Theorem \ref{thm:main} we use as a tool the novel powerful versions of Lagrangian spectral estimators developed in \cite{PS}, which are certain functionals defined on the space of time dependent Hamiltonians (spectral estimators), and on the group of Hamiltonian diffeomorphisms (group estimators). These functionals and their relatives were used in \cite{PS} to show several remarkable applications - in Hofer's geometry, Lagrangian packing, and $ C^0 $ symplectic geometry. Let us briefly describe them and some of their basic properties. 

As before, consider the symplectic sphere $ (S^2,\omega) $, where $ \omega $ is normalized by $ \omega(S^2) = 1 $, and think of it as sitting inside $ \R^3 $ as the sphere of radius $ \frac{1}{2} $ centered at the origin, equipped with the standard area form divided by $ \pi $. Denote by $ x_3 : S^2 \rightarrow \mathbb{R} $ the $x_3$-coordinate function, let $ k \geqslant 1 $ be an integer, and pick a pair of positive real numbers $ 0 < C < B $ satisfying $ 2B + (k-1)C = 1 $. For $ 0 \leqslant j < k $ denote $ L_{k,B}^{0,j} = (x_3)^{-1}(-1/2 + B + jC) $. This gives us a finite collection of ``horizontal'' circles on $ S^2 $, which are of course Lagrangian submanifolds. Then there exists a map $ c_{k,B}^0 : C^\infty(S^2 \times [0,1]) \rightarrow \R $ with the following properties \cite{PS} (some properties described in $ \cite{PS} $ are omitted since we will not use them in the sequel):

\begin{enumerate}
 \item {\em (Hofer-Lipschitz)} For each $ G,H \in C^\infty(S^2 \times [0,1])$,
          $$  |c_{k,B}^0(G) - c_{k,B}^0(H)| \leqslant \int_0^1 \max |G_t-H_t| \, dt   .$$
 \item {\em (Monotonicity)} If $ G,H \in C^\infty(S^2 \times [0,1]) $ satisfy $ G \leqslant H $ as functions, then 
          $$ c_{k,B}^0(G) \leqslant c_{k,B}^0(H) .$$
 \item {\em (Normalization)} For each $ H \in C^\infty(S^2 \times [0,1]) $ and $ b \in C^\infty([0,1]) $,
          $$ c_{k,B}^0(H+b) = c_{k,B}^0(H) + \int_0^1 b(t) \, dt .$$ 
 \item {\em (Lagrangian control)} For any $ H \in C^\infty(S^2 \times [0,1]) $ 
          such that $ (H_t)|_{L_{k,B}^{0,j}} \equiv c_j(t) $ for all $ 0 \leqslant j < k $, we have
          $$ c_{k,B}^0(H) = \frac{1}{k} \sum_{0 \leqslant j < k} \int_0^1 c_j(t) \, dt . $$ 
 \item {\em (Independence of Hamiltonian)} For a normalized Hamiltonian $ H \in C^\infty(S^2 \times [0,1]) $, the value 
         $$ c_{k,B}^0(H) = c_{k,B}^0(\phi_H^1) $$
         depends only on the time-$1$ map $ \phi_H^1 \in \Ham(S^2) .$
 \item {\em (Subadditivity)} For all $ \phi,\psi \in \Ham(S^2) $, 
         $$ c_{k,B}^0(\phi \psi) \leqslant c_{k,B}^0 (\phi) + c_{k,B}^0 (\psi) .$$
 \item {\em ($C^0$-continuity)} The map 
                                                  $$ \tau_{k,k',B,B'} : \Ham(S^2) \rightarrow \R, $$
                                                  $$ \tau_{k,k',B,B'} = c_{k,B}^0 - c_{k',B'}^0 $$ 
                                                  is $2$-Lipschitz in Hofer's metric, is $ C^0 $-continuous, and extends to $ \OHam(S^2) $ by continuity. 
\end{enumerate}

Recall that the above Lagrangian control property of the functionals $ c_{k,B}^0 $ is associated with the collection $ \{ L_{k,B}^{0,j} \}_{0 \leqslant j < k} $ of horizontal circles on $ S^2 $. The work \cite{PS} by Polterovich and Shelukhin has inspired a later work \cite{CHMSS} by Cristofaro-Gardiner, Humili\`ere, Mak, Seyfaddini and Smith, which introduced new invariants sharing properties similar to these of $ c_{k,B}^0 $. These invariants from \cite{CHMSS} can be defined on general closed symplectic surfaces $ (M,\omega) $, and moreover the corresponding Lagrangian control property for them holds for quite general collections of circles on $ M $. The invariants are denoted by $ c_{\underline{L}} $, where $ \underline{L} $ is a suitable given collection of circles on $ M $. In particular, one may choose a collection $ \underline{L} $ so that most of the circles in it (all except for a restricted number of them) are small circles bounding discs of the same area which are ``spread uniformly'' over the surface and capture almost all of its area. That particular property of $ \underline{L} $ was crucial for deriving the so-called ``Calabi property'' for the $ c_{\underline{L}} $'s \cite[page 3, Theorem 1.1]{CHMSS}, which in turn has remarkable applications such as the Fathi conjecture \cite{CHMSS}. We wish to remark that by using the invariants $ c_{\underline{L}} $ from \cite{CHMSS} instead of $ c_{k,B}^0 $ from \cite{PS}, one can extend Theorem \ref{thm:main} to all closed symplectic surfaces.



\subsection{The Calabi property} \label{subsection:Calabi}

The mentioned above Calabi property for the invariants $ c_{\underline{L}} $ states the following \cite{CHMSS}:
\begin{theorem*}
Let $ \underline{L}^m $ be a sequence of equidistributed Lagrangian links in a closed symplectic surface $ (M,\omega) $. Then, for any $ H \in C^\infty(M \times [0,1]) $ we have
$$ \lim_{m \rightarrow \infty} c_{\underline{L}^m} (H) = (\omega(M))^{-1} \int_0^1 \int_M H_t\omega \, dt .$$
\end{theorem*}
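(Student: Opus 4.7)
The plan is to combine the Lagrangian control, Hofer--Lipschitz, and equidistribution hypotheses in a Riemann-sum style argument. For each $m$, write the equidistributed link as $\underline{L}^m = \{L_i^m\}_{i=1}^{N_m}$, and (using the equidistribution assumption) let $D_i^m$ denote the small discs bounded by most of the $L_i^m$'s, all of equal area, whose union covers all of $M$ up to a set of measure $o(1)$, and whose diameters tend to $0$. The exceptional circles (those not of this form) are bounded in number and can ultimately be absorbed into the $o(1)$ terms.

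First, I would reduce the problem to producing a good approximation $\widetilde H^m$ of $H$ which is \emph{constant on each} $L_i^m$. For a smooth $H$, pick a point $p_i^m \in L_i^m$ and set $c_i^m(t) = H_t(p_i^m)$. Since $H$ is smooth and $\mathrm{diam}(L_i^m) \to 0$ uniformly in $i$ (ignoring the bounded exceptional collection), there is a smooth compactly supported Hamiltonian $\widetilde H^m$, obtained by modifying $H$ in small neighborhoods of the circles $L_i^m$ via a cutoff/partition of unity argument, such that
\[
\widetilde H^m_t \big|_{L_i^m} \equiv c_i^m(t), \qquad \big\| \widetilde H^m - H \big\|_{(1,\infty)} \longrightarrow 0 \quad (m \to \infty).
\]
By the Hofer--Lipschitz property, $|c_{\underline{L}^m}(\widetilde H^m) - c_{\underline{L}^m}(H)| \to 0$, so it suffices to evaluate $c_{\underline{L}^m}(\widetilde H^m)$.

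By the Lagrangian control property for $c_{\underline{L}^m}$,
\[
c_{\underline{L}^m}\big(\widetilde H^m\big) \;=\; \frac{1}{N_m} \sum_{i=1}^{N_m} \int_0^1 c_i^m(t)\, dt \;=\; \int_0^1 \Bigg( \frac{1}{N_m} \sum_{i=1}^{N_m} H_t(p_i^m) \Bigg)\, dt,
\]
possibly with weights prescribed by the precise Lagrangian control statement in \cite{CHMSS}, which for equidistributed links reduce (up to $o(1)$ from exceptional circles) to uniform weights $1/N_m$. The equidistribution hypothesis forces the empirical measures $\frac{1}{N_m}\sum_i \delta_{p_i^m}$ to converge weakly to $\omega/\omega(M)$, and hence
\[
\frac{1}{N_m}\sum_{i=1}^{N_m} H_t(p_i^m) \xrightarrow[m\to\infty]{} \frac{1}{\omega(M)} \int_M H_t\, \omega
\]
uniformly in $t \in [0,1]$ since $H$ is smooth and compactly supported. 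Integrating in $t$ and combining with the Hofer--Lipschitz step yields the claim.

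The main obstacle is the flattening step: producing $\widetilde H^m$ that is genuinely constant on each $L_i^m$ while keeping the $L^{(1,\infty)}$ error uniformly small. The argument must ensure that the disjoint neighborhoods in which $H$ is altered around the many circles $L_i^m$ can be chosen thin enough that $\|\widetilde H^m - H\|_{(1,\infty)} = o(1)$, even though $N_m \to \infty$; this is where the smoothness of $H$ and the uniform control on $\mathrm{diam}(L_i^m)$ enter, via standard partition-of-unity estimates. A secondary issue is bookkeeping the bounded collection of exceptional circles in the equidistributed link, whose contribution to the Lagrangian control sum is $O(1/N_m)$ and therefore vanishes in the limit.
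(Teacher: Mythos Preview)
Your approach is essentially correct and follows the natural ``Riemann sum'' route: flatten $H$ along each link component, apply Lagrangian control to evaluate $c_{\underline{L}^m}(\widetilde H^m)$ exactly, and let equidistribution identify the resulting average with the Calabi integral. (One small point: flattening near the exceptional large-diameter circles costs $O(1)$ in $L^\infty$, so the cleaner way to handle them is to sandwich $H$ via Monotonicity between two Hamiltonians constant on every $L_i^m$; the exceptional circles then contribute $O(1/N_m)$ to the Lagrangian-control sum exactly as you say.) This is presumably the argument in \cite{CHMSS} itself. The present paper, however, does something genuinely different: rather than reproving the quoted theorem, it states and proves a strict generalization (Proposition~\ref{prop:Calabi-general}) in which Lagrangian control and equidistribution are replaced by a single-disc \emph{Locality} hypothesis. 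The mechanism is not a Riemann sum at all. One packs $M$ by translates $\cD_{m,j}$ of one small disc $\cD_m$, builds auxiliary Hamiltonians $K,K'$ that push $H$ to $\pm E$ on each $\cD_{m,j}$, and uses the Sikorav-style Lemma~\ref{lemma:Sikorav-style} to show that the flows of $K,K'$ are Hofer-close in $\widetilde{\Ham}(M,\omega)$ to flows supported in the single disc $\cD_m$, where Locality applies directly; Subadditivity and Hofer--Lipschitz then trap $c_m(H)$ in an interval of length $<4\eps$ around the Calabi value. Your argument is more elementary but genuinely needs Lagrangian control along an equidistributed link; the paper's argument trades this for the Sikorav trick and thereby obtains the Calabi property for the $c_{k,B}^0$ of \cite{PS} (Theorem~\ref{thm:Calabi-for-c0kB}), whose associated parallel circles on $S^2$ are not equidistributed in any sense.
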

Here a Lagrangian link means a collection of pairwise disjoint smoothly embedded circles, and for the precise definition of a sequence of equidistributed Lagrangian links we refer the reader to \cite{CHMSS}.
We claim, however, that in principle one can relax the assumptions and still have the Calabi property, so that in particular, relying on the ``equidistribution property'' in unnecessary. In the next proposition, $ \widetilde{\Ham}(M,\omega) $ stands for the universal cover of the Hamiltonian group $ \Ham(M,\omega) $.

\begin{proposition} \label{prop:Calabi-general}
Let $ (M,\omega) $ be a closed and connected symplectic surface and let $ c_m : C^\infty(M \times [0,1]) \rightarrow \R $ be a sequence of functionals, satisfying:
\begin{enumerate}
 \item {\em (Hofer-Lipschitz)} For each $ G,H \in C^\infty(M \times [0,1]) $,
          $$  |c_{m}(G) - c_{m}(H)| \leqslant \int_0^1 \max |G_t-H_t| \, dt   .$$
 \item {\em (Monotonicity)} If $ G,H \in C^\infty(M \times [0,1]) $ satisfy $ G \leqslant H $ as functions, then 
          $$ c_{m}(G) \leqslant c_{m}(H) .$$
 \item {\em (Normalization)} For each $ H \in C^\infty(M \times [0,1]) $ and $ b \in C^\infty([0,1]) $,
          $$ c_{m}(H+b) = c_{m}(H) + \int_0^1 b(t) \, dt .$$ Moreover, $ c_m({\bf 0}) = 0 $ where $ {\bf 0} : M \times [0,1] \rightarrow \R $ is the zero function.
 \item {\em (Independence of Hamiltonian)} For a normalized Hamiltonian $ H \in C^\infty(S^2 \times [0,1]) $, the value 
         $$ c_m(H) = c_m(\phi_H) $$
         depends only on the class $ \phi_H = [(\phi_H^t)] \in \widetilde{\Ham}(M,\omega) .$
 \item {\em (Subadditivity)} For any $ \phi, \psi \in \widetilde{Ham}(M,\omega) $,
         $$ c_{m}(\phi \psi) \leqslant c_{m}(\phi) + c_{m}(\psi) .$$
 \item {\em (Locality)} For each $ m $ there exists a smooth function $ h_m : M \rightarrow [0,1] $ and an open topological disc $ \cD_m \subset M $, such that:
  \begin{itemize}
   \item $ \supp (h_m) \subset \cD_m $, 
   \item The diameter of $ \cD_m $ converges to $ 0 $ when $ m \rightarrow \infty $.
   \item $ \lim_{m \rightarrow \infty} \left( \omega(M) / \omega(\cD_m) \right) c_m(-h_m) = - 1 $.
  \end{itemize}
\end{enumerate}
Then for every $ H \in C^\infty(M \times [0,1]) $ we have
\begin{equation} \label{eq:Calabi-property}
 \lim_{m \rightarrow \infty} c_{m} (H) = (\omega(M))^{-1} \int_0^1 \int_M H_t\omega \, dt .
\end{equation}
\end{proposition}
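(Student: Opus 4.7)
First I would reduce to the case of Hamiltonians with zero spatial mean at each time. Setting $\bar{H}(t)=\omega(M)^{-1}\int_M H_t\,\omega$, Normalization~(3) gives $c_m(H)=c_m(H-\bar{H})+\int_0^1\bar{H}(t)\,dt$, and the last integral equals $\omega(M)^{-1}\int_0^1\!\int_M H_t\,\omega\,dt$. Thus \eqref{eq:Calabi-property} is equivalent to showing that $c_m(K)\to 0$ for every smooth $K\colon M\times[0,1]\to\R$ satisfying $\int_M K_t\,\omega=0$ for each $t\in[0,1]$.

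Next I would derive two consequences of the axioms. From Hofer-Lipschitz~(1) together with $c_m(\mathbf{0})=0$ one gets $|c_m(\phi)|\leq\|\phi\|_\H$, and two applications of Subadditivity~(5) to $\psi\cdot(\phi\psi^{-1})$ and $\psi^{-1}\cdot(\psi\phi)$ yield the conjugation estimate $|c_m(\psi\phi\psi^{-1})-c_m(\phi)|\leq 2\|\psi\|_\H$ for all $\phi,\psi\in\widetilde{\Ham}(M,\omega)$. Moreover, since $h_m$ is autonomous and supported in a disc, the path $t\mapsto\phi_{h_m}^t\phi_{-h_m}^t$ is identically $\id$, so $[\phi_{h_m}][\phi_{-h_m}]$ is trivial in $\widetilde{\Ham}(M,\omega)$; (5) then forces $c_m(h_m)+c_m(-h_m)\geq 0$, and combining with Locality~(6) gives $c_m(h_m)\geq\omega(\cD_m)/\omega(M)\cdot(1+o(1))$.

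The core step is to sandwich $K$ between smooth envelopes $F_m^{\pm}$ with $F_m^-\leq K\leq F_m^+$, built as superpositions of Hamiltonian-translated copies of $\pm h_m$. Pick Hamiltonian diffeomorphisms $\psi_1,\ldots,\psi_{N_m}$ such that the discs $\psi_i(\cD_m)$ are pairwise disjoint and cover $M$ at scale $\mathrm{diam}(\cD_m)$, and set $F_m^+=\gamma_m^+-\sum_i b_i^+(t)\,(h_m\circ\psi_i^{-1})$ with $b_i^+(t)\geq 0$ arranged so that $F_m^+\geq K$ and the constant $\gamma_m^+$ chosen accordingly. By Monotonicity~(2), $c_m(K)\leq c_m(F_m^+)$; the disjoint supports imply that the component flows commute, so iterated Subadditivity~(5) combined with the conjugation estimate and (6) bound $c_m(F_m^+)$ above by a Riemann-sum-type expression $\gamma_m^+-\sum_i b_i^+(t)\,\omega(\cD_m)/\omega(M)+o(1)+2\sum_i b_i^+\,\|\psi_i\|_\H$. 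The zero-mean condition on $K$ forces this Riemann sum to tend to $0$, and a symmetric analysis of $F_m^-$ built from translates of $h_m$ (estimated via the lower bound on $c_m(h_m)$ from the previous step) gives the matching lower bound, yielding $c_m(K)\to 0$.

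The principal technical difficulty is controlling the accumulated conjugation error $\sum_i\|\psi_i\|_\H$: since $N_m\sim\omega(M)/\omega(\cD_m)$ translates are required to cover $M$, a naive geometric construction makes this sum diverge. In dimension two the Hofer cost of transporting a small disc is controlled by the area of the swept tube, so either (i)~an amortized construction in which all translates are produced by a single carefully chosen space-filling Hamiltonian flow, or (ii)~replacement of the discrete sum by an average over a one-parameter flow $\psi^s$ (using Fubini together with the zero-mean hypothesis on $K$ to produce cancellation at leading order), can in principle achieve $\sum_i\|\psi_i\|_\H=o(1)$. Bringing this estimate to the required precision is where the main work lies.
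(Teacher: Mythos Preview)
Your reduction to mean-zero Hamiltonians and your conjugation estimate are correct, but the proof has a genuine gap exactly where you flag it: controlling the accumulated error $\sum_i\|\psi_i\|_\H$. Neither of your proposed fixes is developed enough to carry the argument, and in fact neither is what the paper does. The conjugation-by-$\psi_i$ framework forces you to pay $2\|\psi_i\|_\H$ per disc, and since $N_m\sim\omega(M)/\omega(\cD_m)$ discs are needed, you would need the individual $\|\psi_i\|_\H$ to be $o(\omega(\cD_m))$ on average, which you cannot arrange by moving each disc separately.

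The paper sidesteps this via a Sikorav-type lemma (Lemma~\ref{lemma:Sikorav-style} and its $\widetilde\Ham$ version in Section~\ref{subsubsection:Sikorav-for-universal-cover}). Instead of conjugating each copy individually, one compares the \emph{product} $\Phi=\prod_j f_j$ (with $\supp f_j\subset\cD_j$) directly with the product $\Phi'=\prod_j\phi_j^*f_j$ concentrated in the single disc $\cD_0$, where $\phi_j:\cD_0\to\cD_j$ are merely area-preserving diffeomorphisms of discs (not global Hamiltonian diffeomorphisms). The point is that $\Phi^{-1}\Phi'$ factors as a short product of commutators $[\Theta,\Psi]$, where $\Psi$ is a \emph{single} Hamiltonian diffeomorphism that simultaneously swaps all even--odd pairs of discs; since the discs are disjoint, such a $\Psi$ exists with $\|\Psi\|_\H<\omega(\cD_m)/2$ regardless of how many discs there are. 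This yields $d_\H(\Phi,\Phi')<3\,\omega(\cD_m)\to 0$. Consequently $c_m$ of the spread-out Hamiltonian $K$ differs by $o(1)$ from $c_m$ of a large multiple of $-h_m$ supported in $\cD_0$, and the latter is handled by Locality together with the sublinearity $c_m(t(-h_m))\leq(t-1)c_m(-h_m)$ (from Monotonicity plus Subadditivity).

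The paper's sandwich is also organized differently from yours: rather than invoking Monotonicity on envelopes $F_m^\pm$, it forms $H_1=K\sharp H$ and $H_2=K'\sharp H$ with $\|H_i\|\leq E$, so that $|c_m(H_i)|\leq E$ a priori; Subadditivity then gives $c_m(H)<c_m(H_1)+A_m$ and $c_m(H_2)<c_m(H)+B_m$, and the Sikorav comparison forces $A_m+B_m<-2E+O(\eps)$, trapping $c_m(H)$ in an interval of length $O(\eps)$ whose right endpoint $E+A_m$ is shown to converge to the Calabi value.
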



\subsubsection{Some remarks} \label{remark:Calabi}
          Note that the Normalization, Independence of Hamiltonian and Subadditivity properties for $ c_m $ in Proposition \ref{prop:Calabi-general} are stated differently from the ones which were 
          previously stated for the invariants $ c_{k,B}^0 $. Also, the Locality property combined with the normalization condition $ c_m({\bf 0}) = 0 $ in some sense replace the previously stated 
          Lagrangian control property for $ c_{k,B}^0 $. The main reason to relax the Independence of Hamiltonian is that other than $ c_{k,B}^0 $ relevant functionals introduced in \cite{PS} and 
          \cite{CHMSS} do not necessarily satisfy the original version of the Independence of Hamiltonian property.  
 
          From the Monotonicity property for $ c_m $ it follows that we can extend the functional to all bounded functions $ H $. Concentrating on autonomous functions, for every
          bounded function $ H: M \rightarrow \R $ we may define $ c^+_m(H) := \inf c_m(G) $, where $ G \in C^\infty(M) $ satisfies $ G \geqslant H $ on $ M $. Then in terms of $ c^+_m $, 
          the Locality property can be reformulated by saying that for each $ m $ there exists an open topological disc $ \cD_m \subset M $, such that $ \diam(\cD_m) \rightarrow 0 $ when 
          $ m \rightarrow \infty $, and such that 
          \begin{equation} \label{eq:equiv-formulation-locality}
           \lim_{m \rightarrow \infty} \left( \omega(M) / \omega(\cD_m) \right) c_m^+(-\Id_{\cD_m}) = - 1 , 
          \end{equation} 
          where $ \Id_{\cD_m} : S^2 \rightarrow \R $ 
          is the characteristic function of the disc $ \cD_m $. Indeed, if we assume $(\ref{eq:equiv-formulation-locality})$ then the Locality property clearly follows by the definition 
          of $ c^+_m $ and by the Monotonicity and Normalization properties. For the other direction, assuming the Locality property, we immediately get that the left hand 
          side of $(\ref{eq:equiv-formulation-locality})$ is not greater than $ -1 $. However, by applying Lemma \ref{lemma:Sikorav-style} similarly as in the proof of 
          Proposition \ref{prop:Calabi-general} (or more precisely, in the proof of Lemma \ref{lemma:main} on which Proposition \ref{prop:Calabi-general} relies), one can deduce 
          that the opposite inequality always holds. 

          As a corollary of Proposition \ref{prop:Calabi-general} we conclude that the functionals $ c_{k,B}^0 $ satisfy the Calabi property: 
          
          \begin{theorem} \label{thm:Calabi-for-c0kB}
           For a sequence $ B_k \in (\frac{1}{k+1}, \frac{1}{2}) $ that converges to $ 0 $, for any 
           $ H \in C^\infty(S^2 \times [0,1]) $ we have 
          \begin{equation*} 
            \lim_{k \rightarrow \infty} c_{k,B_k}^0 (H) = \int_0^1 \int_{S^2} H_t\omega \, dt .
          \end{equation*}
          \end{theorem}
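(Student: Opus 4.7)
The plan is to deduce Theorem \ref{thm:Calabi-for-c0kB} from Proposition \ref{prop:Calabi-general} applied to $M=S^2$ (with $\omega(M)=1$) and the sequence of functionals $c_m := c_{k,B_k}^0$, $m = k$. Five of the six hypotheses of the proposition are immediate from the listed properties of $c_{k,B}^0$: Hofer-Lipschitz, Monotonicity and Subadditivity are verbatim the same; the identity $c_{k,B_k}^0({\bf 0})=0$ demanded in (3) follows from the Lagrangian control property applied to the zero function; and Independence of Hamiltonian in the form (4) is implied by the stronger property that $c_{k,B}^0$ descends to $\Ham(S^2)$. It remains to verify the Locality property (6).

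For this, set $C_k=(1-2B_k)/(k-1)$ and $N_k=\lceil k\sqrt{B_k}\rceil$. Using $B_k\in(1/(k+1),1/2)$ together with $B_k\to 0$, one checks directly that $N_k\to\infty$, $N_k/k\to 0$, and $kB_k/N_k\leqslant\sqrt{B_k}\to 0$. Define
$$a_k = B_k + \bigl(N_k - \tfrac{1}{2}\bigr)C_k,\qquad \cD_k = (x_3)^{-1}\bigl([-\tfrac{1}{2},\,-\tfrac{1}{2}+a_k)\bigr),$$
so that $\cD_k$ is the open spherical cap of area $\omega(\cD_k)=a_k$ about the south pole; in particular $\cD_k$ is a topological disc and $\diam(\cD_k)\to 0$ as $a_k\to 0$. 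Let $h_k:S^2\to[0,1]$ be a smooth function depending only on $x_3$, equal to $1$ for $x_3\leqslant -\tfrac{1}{2}+B_k+(N_k-1)C_k$ and equal to $0$ for $x_3\geqslant -\tfrac{1}{2}+a_k$. Then $\supp(h_k)\subset\cD_k$, while by our choice of heights $h_k\equiv 1$ on $L_{k,B_k}^{0,j}$ for $0\leqslant j<N_k$ and $h_k\equiv 0$ on $L_{k,B_k}^{0,j}$ for $N_k\leqslant j<k$.

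Applying the Lagrangian control property to $-h_k$ gives $c_{k,B_k}^0(-h_k) = -N_k/k$. Since $kC_k=(1-2B_k)\,k/(k-1)\to 1$,
$$\frac{ka_k}{N_k} = \frac{kB_k}{N_k} + \Bigl(1-\frac{1}{2N_k}\Bigr)\,kC_k \;\longrightarrow\; 0 + 1\cdot 1 = 1,$$
whence
$$\frac{\omega(S^2)}{\omega(\cD_k)}\,c_{k,B_k}^0(-h_k) = -\frac{N_k}{ka_k}\;\longrightarrow\; -1.$$
This establishes Locality, and Proposition \ref{prop:Calabi-general} then yields the desired Calabi property for $c_{k,B_k}^0$.

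The main subtlety is the calibration of the integer $N_k$. The naive attempt $N_k=1$ (that is, $\cD_k$ containing only $L_{k,B_k}^{0,0}$) fails: such a disc has area at least $B_k$, while Lagrangian control yields only $c_{k,B_k}^0(-h_k)=-1/k$, and for generic sequences $B_k$ these quantities are incomparable. One is forced to absorb a growing number of Lagrangians in the cap, carefully balancing $N_k/k$ against $B_k$ so that $a_k\sim N_k/k$ to leading order and the ratio in Locality converges to $-1$.
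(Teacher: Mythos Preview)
Your proof is correct and follows essentially the same approach as the paper: both verify the Locality hypothesis of Proposition~\ref{prop:Calabi-general} by choosing a polar cap whose area dominates $B_k$ yet still tends to zero, then invoke Lagrangian control. The paper phrases this choice abstractly (pick $B_k'\in(B_k,\tfrac12)$ with $B_k'\to 0$ and $B_k'/B_k\to\infty$, and use the equivalent formulation~(\ref{eq:equiv-formulation-locality})), whereas you make the concrete choice $a_k\sim\sqrt{B_k}$ via $N_k=\lceil k\sqrt{B_k}\rceil$ and work directly with an explicit bump $h_k$; these are the same idea.
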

          \begin{proof}
          For deriving the Calabi property from Proposition \ref{prop:Calabi-general}, it is enough to verify the Locality property for the sequence $ c_{k,B_k}^0 $ of functionals. Choose a sequence 
          $ B_k' \in (B_k, \frac{1}{2}) $ that converges to $ 0 $ and such that 
          $ \lim_{k \rightarrow \infty} B_k' / B_k = \infty $. Then for the sequence $ \cD_m = z^{-1}([-1/2,-1/2+B_m')) $ of spherical discs, $(\ref{eq:equiv-formulation-locality})$ holds for 
          $ c_m := c_{m,B_m}^0 $ by the Lagrangian control property.
          \end{proof}
          

\subsection{The quotient $ \FHomeo(S^2) / \Hameo(S^2) $} \label{section:The-quotient}


Recall that for any $ \phi \in \FHomeo(S^2) $, its norm $ \| \phi \|_\cH $ is defined as the minimal possible $ \liminf_{k \rightarrow \infty} \| \phi_k \| $, where $ (\phi_k) $ is a sequence in $ \Ham(S^2) $ that $ C^0 $ converges to $ \phi $. The quotient $ \FHomeo(S^2) / \Hameo(S^2) $ is naturally endowed with the induced pseudo-norm $ \| \cdot \|_\cH $.

Consider the normed abelian additive group $ (l^\infty, \| \cdot \|_\infty) $ that consists of infinite bounded sequences $ s = (s_1,s_2,\ldots) $ of real numbers, such that $ \| s \|_\infty = \sup |s_k | $, and where the group structure is standard. Moreover, consider the subgroup $ c_0 \subset l^\infty $ that consists of all sequences that converge to $ 0 $. The norm $ \| \cdot \|_\infty $ on $ l^\infty $ naturally descends to a norm on the quotient $ l^\infty / c_0 $. We denote that norm on $ l^\infty / c_0 $ also by $ \| \cdot \|_\infty $. Note that for every $ s \in l^\infty $ and the corresponding element $ [s] \in l^\infty / c_0 $ we have 
$$ \| [s] \|_\infty = \limsup_{k \rightarrow \infty} |s_k| .$$

Using properties of the functionals $ c_{k,B}^0 $ (in particular, the Calabi property stated in Theorem \ref{thm:Calabi-for-c0kB}), combined with a soft approach, we show (cf. \cite[Theorem A]{PS}):

\begin{theorem} \label{thm:quotient-emb-flats}
The normed group $ (l^\infty / c_0 ,  \| \cdot \|_\infty) $ embeds isometrically into the group $ \cG = \FHomeo(S^2) / \Hameo(S^2) $ endowed with the Hofer pseudo-norm $ \| \cdot \|_{\cH} $. 
\end{theorem}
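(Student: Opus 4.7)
I would adapt the infinite-dimensional flat construction of Polterovich--Shelukhin \cite[Theorem~A]{PS}: build the embedding as a commuting product of smooth local time-$1$ Hamiltonian diffeomorphisms supported on a shrinking sequence of pairwise disjoint discs. The resulting map $l^\infty \to \FHomeo(S^2)$ sends the null-sequence subgroup $c_0$ into $\Hameo(S^2)$ because in that case, and only in that case, the assembled Hamiltonian is continuous; the isometric $\limsup$-lower bound on the quotient norm is detected via the $C^0$-continuous differences of the spectral estimators $c_{k,B}^0$, together with the Calabi property of Theorem~\ref{thm:Calabi-for-c0kB} and a Sikorav-style soft argument to absorb the contributions of $\Hameo$-corrections.

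\textbf{Construction.} Fix pairwise disjoint open topological discs $D_1,D_2,\ldots \subset S^2$ with $\diam(D_k) \to 0$, and smooth autonomous bump functions $F_k$ supported in $D_k$ with $\max F_k = 1$, normalised so that $r \mapsto \phi_{rF_k}^1$ is a Hofer-isometric embedding of a short interval of $\R$ (standard, via Lagrangian control inside $D_k$). For $s=(s_k)\in l^\infty$, define
$$\Psi_s\ :=\ \lim_{N\to\infty}\ \phi_{s_1F_1}^1\circ\cdots\circ\phi_{s_NF_N}^1\ \in\ \FHomeo(S^2);$$
the uniform limit exists thanks to the shrinking disjoint supports, and each partial product is the time-$1$ map of the smooth Hamiltonian $\sum_{k\le N}s_kF_k$, whose $L^{(1,\infty)}$-norm equals $\sup_{k\le N}|s_k|$ by disjointness of supports. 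Hence $\|\Psi_s\|_\cH \le \|s\|_\infty$, and since the factors commute, $\Psi : l^\infty \to \FHomeo(S^2)$ is a group homomorphism.

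\textbf{Descent and upper bound.} When $s \in c_0$, the function $\mathcal{H}_s := \sum_k s_k F_k$ is continuous on $S^2\times[0,1]$: on any neighbourhood of an accumulation point of $\bigcup_k D_k$ its values are bounded by $\sup_{k\ge K}|s_k| \to 0$. The finite truncations $\sum_{k\le N}s_kF_k$ converge to $\mathcal{H}_s$ in $L^{(1,\infty)}$ (the remainder has norm $\le \sup_{k>N}|s_k| \to 0$) and their time-$1$ flows converge uniformly to $\Psi_s$, exhibiting $\Psi_s \in \Hameo(S^2)$. Hence $\Psi(c_0) \subset \Hameo(S^2)$ and $\Psi$ descends to $\Phi: l^\infty/c_0 \to \cG$. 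The upper bound $\|\Phi([s])\|_\cH \le \limsup|s_k|$ follows easily: given $\varepsilon>0$, pick $N$ with $\sup_{k>N}|s_k| < \limsup|s_k|+\varepsilon$; the finite partial product $\Psi_{s^{(\le N)}}$ is smooth, hence in $\Ham \subset \Hameo$, and $\Psi_s \cdot \Psi_{s^{(\le N)}}^{-1} = \Psi_{s^{(>N)}}$ has $\cH$-norm $\le \sup_{k>N}|s_k|$.

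\textbf{Lower bound; main obstacle.} For the matching lower bound one must show that for every $\eta \in \Hameo(S^2)$ and every sequence $\chi_j \in \Ham(S^2)$ with $\chi_j \to \Psi_s\cdot\eta$ in $C^0$, one has $\liminf_j\|\chi_j\|_\H \ge \limsup_k|s_k|$. I would argue through the spectral estimators $c_{k,B}^0$: for each $n$ select parameters $(k_n,B_n)$ so that the associated Lagrangian link sits inside $D_n$ and Lagrangian control yields $c_{k_n,B_n}^0(\Psi_s) = s_n$, and an auxiliary pair $(k_n',B_n')$ whose link sits in a reference region of $S^2\setminus\bigcup_k D_k$ (giving $c_{k_n',B_n'}^0(\Psi_s)=0$). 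The $C^0$-continuity of $\tau_{k_n,k_n',B_n,B_n'}$ and its $2$-Lipschitz Hofer property together give $\liminf_j\|\chi_j\|_\H \ge \tfrac12|\tau(\Psi_s\eta)|$; the Calabi property (Theorem~\ref{thm:Calabi-for-c0kB}), applied to the continuous generator of $\eta$, forces both $c_{k_n,B_n}^0(\eta)$ and $c_{k_n',B_n'}^0(\eta)$ to converge to the Calabi invariant of that generator as the parameters tend to their limits, so the $\eta$-contribution to $\tau$ is negligible; a Sikorav-style soft argument as in Theorem~\ref{thm:main} then converts the resulting $|s_n - s_{n'}|$-bound into a sharp $|s_n|$-bound, and letting $n$ range along a subsequence realising $\limsup|s_k|$ yields the claim. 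The principal obstacle is precisely this last step: the $C^0$-continuity is only available for \emph{differences} $\tau$ of the $c_{k,B}^0$'s, and the fine balance between the Lagrangian control (localised in $D_n$) and the Calabi vanishing (requiring the link parameters to escape to infinity) must be arranged so that the $\Hameo$-correction $\eta$ can be absorbed without degrading the sharp $\limsup$ constant.
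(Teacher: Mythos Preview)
Your construction has a fundamental gap in the lower bound, rooted in a misreading of the structure of the estimators $c_{k,B}^0$. The Lagrangian link associated to $c_{k,B}^0$ is \emph{not} localisable inside a single small disc: it consists of $k$ parallel circles $L_{k,B}^{0,j} = x_3^{-1}(-1/2+B+jC)$ spread across the entire sphere, and the Lagrangian control property returns the \emph{average} of the Hamiltonian over these $k$ circles. With your amplitude-$1$ bumps $F_n$ supported in shrinking discs $D_n$, at most one circle of any such link can meet $D_n$, so Lagrangian control yields $c_{k,B}^0(s_n F_n)$ of order at most $|s_n|/k$, not $|s_n|$. No choice of parameters $(k_n,B_n)$ can make the link ``sit inside $D_n$'', and the resulting $\tau$-value is far too small to certify the sharp bound $\limsup|s_k|$.

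The paper resolves this tension in the opposite way from what you sketch. It takes the generating Hamiltonian to be $\sum_k m_k s_k h_k$, with the functions $h_k$ supported in a single small annulus near the south pole and chosen so that $m_k\tau_{m_k}(h_k)=2$; then $\tau_{m_k}(\Phi(s))\approx 2s_k$ exactly because the amplitude is boosted by the factor $m_k$. The price is that this Hamiltonian has huge $L^\infty$ norm, so the upper bound $\|\Phi(s)\|_\cH \lesssim \|s\|_\infty$ is no longer obvious. This is where the Sikorav lemma enters --- on the \emph{upper} side, not the lower side: it shows that the time-$1$ map of the concentrated Hamiltonian $m_k s_k h_k$ (one disc, amplitude $\sim m_k|s_k|$) is Hofer-close to the time-$1$ map of a spread-out Hamiltonian supported in $m_k$ disjoint discs of amplitude $\sim|s_k|$ each. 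That same boosting also means $\Phi(c_0)\subset\Hameo(S^2)$ is no longer automatic (the sum $\sum m_k s_k h_k$ is generally discontinuous even when $s_k\to 0$), and the paper needs a separate Sikorav-based argument for it. Your proposal gets the easy direction of the Hofer bound and the $c_0$-descent for free precisely because it lacks the amplitude needed for the detection step; the two requirements are in genuine tension, and bridging them is the main content of the proof.
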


\begin{corollary} \label{cor:quotient-emb-flats}
One can isometrically embed into $ \cG $ the normed group $ (l^\infty ,  \| \cdot \|_\infty) $, and also the normed group $ (C(X), \| \cdot \|_\infty) $ when $ X $ is a separable topological space (e.g. when $ X = \R $). 
\end{corollary}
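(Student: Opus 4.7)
The plan is to deduce both statements directly from Theorem \ref{thm:quotient-emb-flats} by producing two soft, purely combinatorial isometric additive embeddings into $ (l^\infty / c_0, \| \cdot \|_\infty) $, and then composing with the embedding furnished by that theorem. Along the way I would interpret $ C(X) $ as the bounded continuous real-valued functions $ C_b(X) $, since otherwise $ \| \cdot \|_\infty $ would not define a finite norm on all of $ C(X) $ already for $ X = \R $.

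For the embedding $ (l^\infty, \| \cdot \|_\infty) \hookrightarrow (l^\infty / c_0, \| \cdot \|_\infty) $, I would use a \emph{multiple repetition} trick. Fix a partition of $ \bbN $ into pairwise disjoint infinite subsets $ A_1, A_2, \ldots $ --- concretely, one may take $ A_n = \{ 2^{n-1}(2m-1) : m \geqslant 1 \} $, whose disjoint union is $ \bbN $ via the factorization into odd part times power of $ 2 $ --- and define $ \iota : l^\infty \to l^\infty $ by $ \iota(s)_k = s_n $ whenever $ k \in A_n $. This is additive by construction, and since every entry $ s_n $ appears infinitely often in $ \iota(s) $, one has
$$ \| [\iota(s)] \|_\infty = \limsup_{k \to \infty} |\iota(s)_k| = \sup_n |s_n| = \| s \|_\infty, $$
where $ [\,\cdot\,] $ denotes the class in $ l^\infty / c_0 $. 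In particular $ [\iota(s)] = 0 $ forces $ s = 0 $, so the composition with the quotient projection is an injective additive isometry.

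For the embedding $ (C_b(X), \| \cdot \|_\infty) \hookrightarrow (l^\infty, \| \cdot \|_\infty) $, pick a countable dense sequence $ \{ x_n \}_{n \geqslant 1} \subset X $ (which exists by separability) and consider the evaluation map $ \jmath(f) := (f(x_n))_{n \geqslant 1} $. Continuity of $ f $ together with density of $ \{ x_n \} $ in $ X $ yields $ \sup_n |f(x_n)| = \sup_{x \in X} |f(x)| = \| f \|_\infty $, so $ \jmath $ is an isometric additive embedding. The special case $ X = \R $ is then handled by taking $ \{ x_n \} $ to be any enumeration of $ \mathbb{Q} $. The only real ``obstacle'' is the notational one of settling the interpretation of $ C(X) $ as $ C_b(X) $; once this convention is fixed, both embeddings are entirely classical, and all the substantive content of the corollary is already packed into Theorem \ref{thm:quotient-emb-flats}.
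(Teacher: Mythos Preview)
Your argument is correct and follows essentially the same route as the paper: the paper too partitions $\bbN$ into countably many infinite subsets to embed $l^\infty$ into $l^\infty/c_0$, and uses evaluation on a countable dense subset to embed $C(X)$ into $l^\infty$. Your observation that one must read $C(X)$ as $C_b(X)$ for the statement to make sense is a fair clarification that the paper leaves implicit.
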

\noindent The corollary readily follows from the theorem and from the fact that one can isometrically embed the normed group $ (l^\infty ,  \| \cdot \|_\infty) $ into the normed group $ (l^\infty / c_0 ,  \| \cdot \|_\infty) $, and moreover isometrically embed $ (C(X), \| \cdot \|_\infty) $ into $ (l^\infty ,  \| \cdot \|_\infty) $ (for a separable topological space $ X $). We also remark that Theorem \ref{thm:quotient-emb-flats} is stronger than Corollary \ref{cor:quotient-emb-flats} in the sense that there is no isometric group embedding of $ (l^\infty / c_0 ,  \| \cdot \|_\infty) $ into
$ (l^\infty ,  \| \cdot \|_\infty) $. See Section \ref{subsubsection:Some-properties} for more details.

It is known that the group $ \cG $ is abelian \cite[Proposition 2.2]{CHMSS}. Theorem \ref{thm:quotient-emb-flats} implies that the torsion-free rank of $ \cG $ is continuum (since the cardinality of $ \cG $ is continuum itself). Our approach, however, does not seem to help understanding the torsion part of $ \cG $. Also, it is would be interesting to verify whether the Hofer pseudo-norm on $ \cG $ is non-degenerate (that is, a genuine norm). 
In addition, the following questions remain unanswered. For a smooth function $ h : (-1/2,1/2) \rightarrow \R $, consider the time-$1$ map $ \phi $ of the Hamiltonian flow of the autonomous function $ H(x_1,x_2,x_3) = h(x_3) $ defined on the sphere without the north and south poles, and extend $ \phi $ to a homeomorphism of $ S^2 $. For which $ h $ do we have $ \phi \in \FHomeo(S^2) $, and in that case what is the norm $ \| \phi \|_\cH $ (if not precisely, then up to a quasi-equivalence)? For which $ h $ do we have $ \phi \in \Hameo(S^2) $? Our proof of Theorem \ref{thm:quotient-emb-flats} shows only a very partial answer to these questions.

To conclude, we remark that the proofs of Theorems \ref{thm:main} and \ref{thm:quotient-emb-flats}, and of Proposition \ref{prop:Calabi-general}, are quite close. Morally speaking, Theorem \ref{thm:quotient-emb-flats} is more general than Theorem \ref{thm:main}. We still preferred to keep Theorem \ref{thm:main} for the convenience of the reader, since it serves a good motivation for the latter, and since its proof is simpler than that of Theorem \ref{thm:quotient-emb-flats} and does not rely on the Calabi property for the functionals $ c_{k,B}^0 $.

\subsection{Notation and preliminary remarks} \label{section:Notation}

Let $ (M,\omega) $ be a symplectic surface. For a subset $ A \subset M $, its $ \omega $-area is denoted by $ \omega(A) $.
For a Hamiltonian function $ H \in C^\infty(M \times [0,1]) $, $ (\phi_H^t)_{t \in [0,1]} $ denotes the Hamiltonian flow of $ H $. For given Hamiltonian functions $ H, K \in C^\infty(M \times [0,1]) $, we denote $ H \sharp K (x,t) = H(x,t) + K((\phi_H^t)^{-1}(x)) $, and that Hamiltonian function generates the composition flow $ (\phi_H^t \circ \phi_K^t) $. Moreover by $ \overline{H}(x,t) = -H(\phi_H^t(x),t) $ we denote Hamiltonian function that generates the inverse flow $ ((\phi_H^t)^{-1}) $. The $ L^\infty $ norm of a Hamiltonian $ H \in C^\infty(M \times [0,1]) $ is denoted by $ \| H \| = \max |H| $. We say that a Hamiltonian $ H \in C^\infty(M \times [0,1]) $ is normalized if $ \int_M H_t \omega = 0 $ for all $ t \in [0,1] $. 

Now let $ (M,\omega) $ be a closed symplectic surface. The Hofer distance on $ \Ham(M,\omega) $ is denoted by $ d_\H $, and for every $ \phi \in \Ham(M,\omega) $, $ \| \phi \|_\H $ stands for the Hofer norm of $ \phi $. We also denote by $ d_\H $ and $ \| \cdot \|_\H $ the Hofer distance and norm on  $ \widetilde{\Ham}(M,\omega) $ (the fundamental cover of $ \Ham(M,\omega) $). In order to speak about $ C^0 $ convergence, we equip $ M $ with an auxiliary Riemannian metric which defines the distance function $ d : M \times M \rightarrow \R $. For homeomorphisms $ \phi, \psi : M \rightarrow M $, we define $ d_{C^0}(\phi,\psi) = \max_{x \in M} d(\phi(x),\psi(x)) $ and $ \od_{C^0}(\phi,\psi) = \max(d_{C^0}(\phi,\psi),d_{C^0}(\phi^{-1},\psi^{-1})) $.
The reason for considering the metric $ \od_{C^0} $ is related to the following important property: $ \Homeo(M) $ is complete with respect to $ \od_{C^0} $, that is,
if we have a Cauchy sequence in $ \Homeo(M) $ with respect to $ \od_{C^0} $ then it necessarily converges (with respect to $ \od_{C^0} $) to a homeomorphism of $ M $. For a sequence of homeomorphisms of $ M $, in order to conclude its uniform convergence to some homeomorphism, the Cauchy property with respect to $ d_{C^0} $ is generally not enough. Note however that if a sequence in $ \Homeo(M) $ is known to converge uniformly (i.e. with respect to $ d_{C^0} $) a homeomorphism, then it in fact converges to that homeomorphism with respect to $ \od_{C^0} $.
For $ \phi \in \Homeo(M) $ and a sequence $ \phi_k \in \Homeo(M) $ we will write $ \phi = (C^0)\lim_{k\rightarrow \infty} \phi_k $ if $ \lim_{k \rightarrow \infty} d_{C^0}(\phi,\phi_k) = 0 $ (which is equivalent to $ \lim_{k \rightarrow \infty} \od_{C^0}(\phi,\phi_k) = 0 $).

%

\subsection{Acknowledgements}

I thank Leonid Polterovich and Egor Shelukhin for an interest in this work and for fruitful discussions. I thank Vincent Humili\`ere and Sobhan Seyfaddini for sharing the question on the distinctness between the groups $ \Hameo $ and $ \FHomeo $, and for their comments on a preliminary version of this work. The author was partially supported by ERC Starting Grant 757585 and ISF Grant 2026/17.

\section{Proofs} \label{section:Proofs}

%

The central lemma which is used in the proofs is essentially due to Sikorav \cite[Section 8.4]{Si}:

\begin{lemma} \label{lemma:Sikorav-style}
Let $ (M,\omega) $ be a closed and connected symplectic surface. Let $ \eps > 0 $, let $ m $ be a positive integer, and let $ \cD_0, \ldots, \cD_m \subset M $ be topological open discs of area $ \eps $ each, such that for every $ 1 \leqslant j \leqslant m $ we are given a symplectic diffeomorphism $ \phi_j : \cD_0 \rightarrow \cD_j $. Moreover, let $ f_0, f_1, \ldots, f_m \in \Ham(M,\omega) $ with $ \supp (f_j) \subset \cD_j $. Define $ \Phi, \Phi' \in \Ham(M,\omega) $ by 
$$ \Phi = f_0 f_1 \cdots f_m $$ and $$ \Phi' = f_0 \Pi_{j=1}^m \phi_j^* f_j  ,$$ where $ \phi_j^* f_j $ is given by $ \phi_j^* f_j  = (\phi_j)^{-1} f_j \phi_j $ on $ \cD_0 $ and $ \phi_j^* f_j = \Id $ on $ M \setminus \cD_0 $. Then 
\begin{equation} \label{eq:statement-Sikorav-style}
 d_\H(\Phi,\Phi') < 3 \eps .
\end{equation}
\end{lemma}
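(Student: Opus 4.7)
Since the Hofer metric is invariant under left multiplication by $f_0$, the task reduces to bounding $\|\Phi^{-1}\Phi'\|_\H$. Writing $g_j := \phi_j^* f_j$ and assuming (as the $3\eps$ bound seems to require) that $\cD_0, \cD_1, \ldots, \cD_m$ are pairwise disjoint, the $f_j$'s commute pairwise and each commutes with $g_k$ for $k\neq j$; reordering therefore gives
\[
\Phi^{-1}\Phi' \;=\; \prod_{j=1}^m \bigl(f_j^{-1} g_j\bigr) \;=\; \prod_{j=1}^m [\Psi_j, g_j^{-1}],
\]
where each $\Psi_j \in \Ham(M,\omega)$ is any ambient Hamiltonian extension of $\phi_j$ (so $\Psi_j|_{\cD_0}=\phi_j$ and $f_j = \Psi_j g_j \Psi_j^{-1}$).

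The per-factor estimate rests on the standard observation that the commutator isotopy $t\mapsto [\Psi_j^t, g_j^{-1}]$ is generated by the Hamiltonian $L_j^t(x) = K_j^t(x) - K_j^t(\Psi_j^t g_j (\Psi_j^t)^{-1}(x))$, where $K_j$ generates $\Psi_j^t$. Crucially, $L_j^t$ is supported on the moving disc $\Psi_j^t(\cD_0)$ and, being a difference of $K_j^t$-values at two points therein, has oscillation bounded by $\osc_{\Psi_j^t(\cD_0)} K_j^t$---a bound restricted to the moving disc, independent of the (possibly huge) Hofer norm of $g_j$. A ``thin tube'' construction in area-preserving coordinates, using a topological disc $U_j \supset \cD_0 \cup \cD_j$ of area arbitrarily close to $2\eps$ and taking $K_j^t$ of translation type, then gives $\|[\Psi_j,g_j^{-1}]\|_\H < 2\eps + \delta_j$ for any preassigned $\delta_j > 0$.

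The main obstacle---and the crux of the argument---is upgrading this to a total bound independent of $m$, since the triangle inequality yields only $2m\eps$. My plan is a parallel-execution scheme: arrange the tubes $U_j$ radially around $\cD_0$ so that $U_j\setminus\cD_0$ are pairwise disjoint, and design each isotopy $\Psi_j^t$ so that for $t$ outside a short initial window $[0,\varepsilon_0]$ with $\varepsilon_0 \leq 1/(2m)$, the moving disc $\Psi_j^t(\cD_0)$ has already exited $\cD_0$ into its own tube. On $(\varepsilon_0, 1]$ the supports of the $L_j^t$'s are then pairwise disjoint; choosing the $K_j^t$'s uniformly in $j$ so that each $L_j^t$ has the same shape of oscillation profile, the combined Hamiltonian $H^t := \sum_j L_j^t$ satisfies $\osc H^t = \max_j \osc L_j^t$, and its integral contributes at most $2\eps + \delta$ to $\|H\|_{(1,\infty)}$. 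On the initial window $[0,\varepsilon_0]$ the supports overlap at $\cD_0$ and the oscillations may sum; slowing the isotopies there bounds this overlap contribution by $\varepsilon_0 \cdot m \cdot O(\eps) < \eps$ uniformly in $m$. Summing yields $\|H\|_{(1,\infty)} < 3\eps$, as required. The technical heart of the proof---which I expect to be the most delicate point---is the simultaneous construction of the family $(\Psi_j^t)$ with the required radial-disjointness on $M$ and the careful verification that disjoint-support normalized Hamiltonians combine as claimed, so that the time-$1$ map of the combined flow genuinely equals $\prod_{j=1}^m (f_j^{-1} g_j)$.
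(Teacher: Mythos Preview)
Your reduction to $\Phi^{-1}\Phi' = \prod_{j=1}^m [\Psi_j, g_j^{-1}]$ and the per-factor bound $\|[\Psi_j,g_j^{-1}]\|_\H < 2\eps$ are fine, but the ``parallel execution'' scheme that is supposed to beat the naive $2m\eps$ bound does not work as written, and the difficulty is not merely technical. The two requirements you impose on $[0,\varepsilon_0]$ are in direct conflict: for the moving discs $\Psi_j^t(\cD_0)$ to have \emph{left} $\cD_0$ by time $\varepsilon_0$ you must displace a disc of area $\eps$ in time $\varepsilon_0$, which forces $\int_0^{\varepsilon_0}\osc K_j^t\,dt \geqslant \eps$ (this is just the displacement-energy lower bound). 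Hence each $\int_0^{\varepsilon_0}\osc L_j^t\,dt$ is of order $\eps$, and summing over $j$ gives a contribution of order $m\eps$ on the overlap window---exactly what you were trying to avoid. Conversely, if you ``slow the isotopies'' so that $\osc L_j^t = O(\eps)$ there, the discs have \emph{not} exited $\cD_0$ and the supports continue to overlap well past $\varepsilon_0$. Either way the bookkeeping collapses. There is also a second, independent problem: even granting disjoint supports on $(\varepsilon_0,1]$, the time-$1$ map of $\sum_j L_j^t$ is not $\prod_j [\Psi_j,g_j^{-1}]$, because on $[0,\varepsilon_0]$ all the $L_j^t$ live in $\cD_0$ and their flows genuinely interact (the factors $f_j^{-1}g_j$ all share support in $\cD_0$ and do not commute). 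Finally, arranging $m$ tubes $U_j\setminus\cD_0$ pairwise disjoint on an arbitrary closed surface for arbitrary positions of the $\cD_j$ is itself not guaranteed.

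The paper avoids all of this by a different mechanism (Sikorav's trick). Rather than $m$ separate conjugators $\Psi_j$, one builds a \emph{single} $\Psi\in\Ham(M,\omega)$ with $\|\Psi\|_\H<\eps/2$ that simultaneously realises $\phi_{2i+1}\phi_{2i}^{-1}$ on every even disc $\cD_{2i}$; this is cheap precisely because the discs are disjoint, so the individual moves can be superposed at no extra Hofer cost. One then writes, for instance,
\[
\Phi^{-1}\widetilde\Phi \;=\; \Bigl(\textstyle\prod_i f_{2i+1}\Bigr)^{-1}\,\Psi^{-1}\,\Bigl(\textstyle\prod_i f_{2i+1}\Bigr)\,\Psi,
\]
a \emph{single} commutator with $\Psi$, giving $d_\H(\Phi,\widetilde\Phi)\leqslant 2\|\Psi\|_\H<\eps$. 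Two further steps of the same kind (pairing odd/even discs via a second diffeomorphism $\Psi'$, and a telescoping rearrangement) produce intermediates $\widetilde\Phi$, $\widehat\Phi$ with $d_\H(\Phi,\widetilde\Phi), d_\H(\widetilde\Phi,\widehat\Phi), d_\H(\widehat\Phi,\Phi')<\eps$, whence the $3\eps$ bound. The point is that the $m$-dependence is absorbed into one global conjugator rather than into a sum of $m$ local ones; your scheme tries to parallelise at the level of the commutator Hamiltonians, where the energy--capacity inequality blocks it.
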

\noindent Its proof will be given in Section \ref{subsection:Sikorav-style-lemma-proof} below. 

\begin{remark} \label{remark:Sikorav-style}
As it can be seen from the proof of Lemma \ref{lemma:Sikorav-style}, if one removes from the statement of the lemma the assumption that $ M $ is closed, one still gets
a similar conclusion that $ \Phi^{-1} \Phi' $ is generated by a normalized Hamiltonian function $ H $ of $ L^{(1,\infty)} $ norm less than $ 3\eps $. Moreover, by a reparametrization we can have $ \| H \| < 3\eps $.
\end{remark}

Proofs of Theorems \ref{thm:main} and \ref{thm:quotient-emb-flats} use the notation that we now introduce. For each integer $ m \geqslant 3 $ make a choice of real numbers $ \frac{1}{m+1} < B_m < B_m' < \frac{1}{m} $ and denote $ C_m = (1-2B_m)/(m-1) $ and $ C_m' = (1-2B_m')/(m-1) $. Then denote $ \sigma_m := c_{m,B_m}^0 $ and $ \sigma_m' := c_{m,B_m'}^0 $, and let $ \tau_{m} := \sigma_m - \sigma_m' $.

The main ingredient in the proof of Theorem \ref{thm:main} is the following lemma:

\begin{lemma} \label{lemma:main}
Let $ H: S^2 \times [0,1] \rightarrow \R $ be a smooth normalized Hamiltonian function. Then for every $ E > \| H \| $, $ \eps > 0 $ and every smooth Hamiltonian function $ F : S^2 \times [0,1] \rightarrow \R $, one can find an integer $ m \geqslant 3 $ and a smooth normalized Hamiltonian function $ H' : S^2 \times [0,1] \rightarrow \R $ such that:
\renewcommand\labelenumi{(\theenumi)}
\begin{enumerate}
 \item $ \od_{C^0} (\phi_H^t,\phi_{H'}^t) < \eps $ for every $ t \in [0,1] $.
 \item $ \| H' \| < E$.
 \item $ |\tau_m (H' \sharp F)| > 2E - \eps $.
\end{enumerate}
\end{lemma}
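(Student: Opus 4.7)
The plan is to construct $H'$ in the form $H' = H \sharp K$, where $K$ is a time-dependent Hamiltonian whose time-$1$ map is Hofer-close to that of an autonomous \emph{ideal} Hamiltonian $K^{*}$ on which Lagrangian control forces $\tau_m(K^{*}) \approx 2E$. Fix $\delta \in (0, E - \|H\|)$ with $\delta < \varepsilon/4$, and choose $m \geq 3$ large (to be specified) together with $B_m, B_m' \in (\tfrac{1}{m+1}, \tfrac{1}{m})$ with $B_m' - B_m$ small. Let $h \in C^\infty(\mathbb{R})$ satisfy $|h| \leq E - \delta$, equal $E - \delta$ on small neighborhoods of the heights $-\tfrac{1}{2} + B_m + j C_m$ of the circles $L_{m,B_m}^{0,j}$, and equal $-(E - \delta)$ on small neighborhoods of the heights $-\tfrac{1}{2} + B_m' + j C_m'$ of the circles $L_{m,B_m'}^{0,j}$. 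Set $K^{*}(x) := h(x_3)$ (with a tiny constant correction so that $K^{*}$ is normalized). Since $K^{*}$ is then constant on each of the Lagrangian circles in question, applying the Lagrangian control property to both $\sigma_m$ and $\sigma_m'$ immediately yields $\sigma_m(K^{*}) = E - \delta$, $\sigma_m'(K^{*}) = -(E - \delta)$, so $\tau_m(K^{*}) = 2(E - \delta)$.

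The Hamiltonian $K^{*}$ cannot be used directly: its gradient is of order $(E - \delta)/(B_m' - B_m)$ in the transition strips, so $\phi_{K^{*}}^t$ is very far from the identity in $C^0$. To circumvent this I apply Lemma~\ref{lemma:Sikorav-style} iteratively. Cover the active support of $K^{*}$ by many mutually disjoint small discs $\mathcal{D}_{k,n}$ of area $\eta$, and write $\phi_{K^{*}}^1$ as a long product of commuting Hamiltonian diffeomorphisms $f_{k,n}$, each supported in $\mathcal{D}_{k,n}$. Applying Lemma~\ref{lemma:Sikorav-style} one disc at a time, this product is $O(\eta)$-Hofer-close to a product of conjugated pieces all supported in a single small disc $\mathcal{D}_0 \subset S^2$. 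Using the reparametrization from Remark~\ref{remark:Sikorav-style}, the corresponding generating Hamiltonian $K$ can be arranged so that $K_t$ is supported in $\mathcal{D}_0$ at every $t$, $\|K\|_\infty < E - \delta/2$, and $d_\H(\phi_K^1, \phi_{K^{*}}^1) < \varepsilon/16$. Consequently $\phi_K^t$ is supported in $\mathcal{D}_0$ for all $t$, so $\bar{d}_{C^0}(\phi_K^t, \mathrm{id}) \leq \mathrm{diam}(\mathcal{D}_0)$, which we arrange to be less than $\varepsilon/(2L)$ where $L$ bounds the Lipschitz constants of the family $\phi_H^t$. Then define $H'(x, t) := H(x, t) + K(\phi_H^{-t}(x), t)$, so $\phi_{H'}^t = \phi_H^t \circ \phi_K^t$ and condition (1) is satisfied.

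For condition (2) we choose $\mathcal{D}_0$ so that the orbit $\{\phi_H^t(\mathcal{D}_0) : t \in [0,1]\}$ stays inside the open set $\{(x,t) : |H(x,t)| < \delta/2\}$; the existence of such a $\mathcal{D}_0$ is a pigeonhole/measure-theoretic statement using that $\int H_t \omega = 0$ for each $t$ forces the open sets $\{|H_t| < \delta/2\}$ to have positive $\omega$-measure, together with the continuity of $H$. On this orbit $|H'| \leq |H| + \|K\|_\infty < \delta/2 + (E - \delta/2) = E$, and off the orbit $H' = H$ satisfies $|H'| \leq \|H\| < E$. For condition (3), property (5) (Independence of Hamiltonian) gives $\tau_m(H' \sharp F) = \tau_m(\phi_H^1 \phi_K^1 \phi_F^1)$. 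The subadditivity of $\sigma_m, \sigma_m'$ applied in both directions yields
\[
|\tau_m(\phi_H^1 \phi_K^1 \phi_F^1) - \tau_m(\phi_K^1)| \leq 2\bigl(|\sigma_m(\phi_H^1)| + |\sigma_m((\phi_H^1)^{-1})| + |\sigma_m(\phi_F^1)| + |\sigma_m((\phi_F^1)^{-1})|\bigr) + (\text{analog for } \sigma_m').
\]
Each of the eight quantities on the right tends to $0$ as $m \to \infty$ by the Calabi property (Theorem~\ref{thm:Calabi-for-c0kB}) applied to the four normalized Hamiltonians $H, \overline{H}, F, \overline{F}$. Choosing $m$ large enough to make this error $< \varepsilon/4$, we obtain $|\tau_m(H' \sharp F)| > 2(E - \delta) - \varepsilon/8 - \varepsilon/4 > 2E - \varepsilon$ by the choice $\delta < \varepsilon/4$.

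The main obstacles are, first, the Sikorav replacement step, which requires careful bookkeeping of the $L^\infty$-norm of $K$ through the reparametrization in Remark~\ref{remark:Sikorav-style} (and a verification that $\phi_{K^{*}}^1$ can indeed be decomposed into a commuting product of diffeomorphisms supported in disjoint small discs of arbitrary small area $\eta$), and second, the pigeonhole selection of $\mathcal{D}_0$ in Paragraph~3 so as to satisfy the sharp $L^\infty$ bound on $H'$; both are standard in spirit but technically delicate.
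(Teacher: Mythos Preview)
There are genuine gaps that I do not see how to fill.

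The Sikorav step fails as written. Your ideal Hamiltonian $K^*=h(x_3)$ generates a twist: $\phi_{K^*}^1$ rotates each horizontal circle by an amount proportional to $h'(x_3)$, and its support contains full horizontal annuli. Such a map cannot be written as a product of \emph{commuting} Hamiltonian diffeomorphisms supported in mutually disjoint small discs, because a commuting product is supported in the disjoint union of the supports, and a separating annulus on $S^2$ (being connected) would have to lie inside a single one of those small discs, which is impossible once the disc area is small. So the decomposition you feed into Lemma~\ref{lemma:Sikorav-style} does not exist. Even granting a transplant of $\phi_{K^*}^1$ to some $\Phi'$ supported in a small disc $\mathcal{D}_0$ with $d_\H(\Phi',\phi_{K^*}^1)$ small, you then need a generating Hamiltonian $K$ for $\Phi'$ that is supported in $\mathcal{D}_0$ for all $t$ \emph{and} has $\|K\|<E-\delta/2$. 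Lemma~\ref{lemma:Sikorav-style} and Remark~\ref{remark:Sikorav-style} bound only the $L^{(1,\infty)}$ norm of a generator of $\Phi^{-1}\Phi'$, not the $L^\infty$ norm of a $\mathcal{D}_0$-supported generator of $\Phi'$ itself; the natural candidate (time-concatenation of the conjugated pieces, rescaled to $[0,1]$) has $L^\infty$ norm of order $N\cdot E$, not $E$. The paper resolves both issues by reversing the construction: it defines $K$ from the outset as a sum of bumps placed in many disjoint small discs $\mathcal{D}_j$ covering $S^2$ (so $\|K\|<E$ and $C^0$-smallness of the flow are immediate by disjointness of supports), and applies Sikorav only to compare $\sigma_m$-values of the spread-out $K$ with those of a localized $K_0$ supported in the single polar cap containing one circle from each link, where Lagrangian control applies directly.

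Second, the pigeonhole choice of $\mathcal{D}_0$ for condition~(2) is not justified. You need $\bigcap_{t\in[0,1]}(\phi_H^t)^{-1}\{|H_t|<\delta/2\}$ to contain a disc. The normalization $\int H_t\,\omega=0$ only guarantees each slice $\{|H_t|<\delta/2\}$ is nonempty; it gives no lower bound on its measure and no control on the intersection over $t$, and for general time-dependent $H$ one can arrange every trajectory to meet $\{|H_t|\geq\delta/2\}$ at some time. The paper never needs such a choice: since its $K$ is spread over all of $S^2$, the bump in each disc $\mathcal{D}_j$ carries time-dependent coefficients tailored to $\inf_{\mathcal{D}_j}H_t$ and $\sup_{\mathcal{D}_j}H_t$, making $\|K\sharp H\|<E$ a pointwise check. (Your route to condition~(3) via the Calabi property is also different: the paper instead constructs \emph{two} candidates $H_1,H_2$ and shows $\tau_m(H_2\sharp F)<\tau_m(H_1\sharp F)-4E+2\varepsilon$ by subadditivity, so one of $|\tau_m(H_i\sharp F)|$ exceeds $2E-\varepsilon$ without invoking Calabi.)
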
 
\noindent  Its proof relies on Lemma \ref{lemma:Sikorav-style}, see Section \ref{subsection:lemma-main-proof}. 

\subsection{Proof of Theorem \ref{thm:main}}

Let $ E > 0 $ and let $ H: S^2 \times [0,1] \rightarrow \R $ be a smooth normalized Hamiltonian function with $ \| H \|_{(1,\infty)} \leqslant E $. We need to show that for every $ \eps > 0 $ we can find a continuous path $ (\varphi^t)_{t\in[0,1]} $ in $ \Homeo(S^2) $ such that $ d_{C^0}(\varphi^t,\phi_H^t) < \eps $ for all $ t \in [0,1] $, and which satisfies the properties $(1)$ and $(2)$ stated in the theorem. 
By approximation and reparametrization we may without loss of generality assume that $ \| H \| < E $.

The space of smooth normalized functions $ S^2 \times [0,1] \rightarrow \R $ is separable while endowed with the $ L^{(1,\infty)} $ norm. Choose a corresponding dense sequence of normalized Hamiltonian functions $ F_j : S^2 \times [0,1] \rightarrow \R $, $ (j=1,2,\ldots$), and then the sequence of time-$1$ maps $ \Phi_j := \phi_{F_j}^1 $ is dense in $ \Ham(S^2) $ with respect to the Hofer metric. We now inductively construct a sequence $ H_0, H_1, \ldots $ of time dependent normalized Hamiltonian functions on $ S^2 $. Set $ H_0 = H $, and for each $ k \geqslant 1 $, Lemma \ref{lemma:main} provides us a smooth Hamiltonian function $ H_k : S^2 \times [0,1] \rightarrow \R $ such that $ \od_{C^0} (\phi_{H_{k-1}}^t,\phi_{H_k}^t) < \eps/2^{k} $ for every $ t \in [0,1] $, such that $ \| H_k \| < E$, and such that for some $ m_k \geqslant 3 $ we have $ |\tau_{m_k}(H_k \sharp \overline{F_k})| > 2E - 1/k $. Moreover by the $ C^0 $-continuity of $ \tau_{m_j} $'s as functionals on $ \Ham(S^2) $, we may inductively assume that we also have $ |\tau_{m_j}(H_k \sharp \overline{F_j})| > 2E - 1 / j $ for every $ 1 \leqslant j < k $ (indeed, on the step $ k \geqslant 1 $ when we obtain $ H_k $, we get the bound $ |\tau_{m_j}(H_{k-1} \sharp \overline{F_j})| > 2E - 1 / j $ from the previous step, and then the inequality $ |\tau_{m_j}(H_{k} \sharp \overline{F_j})| > 2E - 1 / j $ follows from the $ C^0 $ continuity of $ \tau_{m_j} $ provided that $ d_{C^0}(\phi_{H_k}^1,\phi_{H_{k-1}}^1) $ is small enough -- and the item $(1)$ from the statement of Lemma \ref{lemma:main} guarantees that smallness).

To summarise, we now have a Hofer dense in $ \Ham(S^2) $ sequence $ \Phi_1,\Phi_2,\ldots $, where $ \Phi_j = \phi_{F_j}^1 $, and we have a sequence of smooth normalized Hamiltonian functions $ (H_k) $ and a sequence of indices $ (m_k) $, such that:
\begin{itemize}
\item[(a)] $ H_0 = H $.
\item[(b)] $ \od_{C^0} (\phi_{H_{k-1}}^t,\phi_{H_k}^t) < \eps/2^{k} $.
\item[(c)]  $ \| H_k \| < E $.
\item[(d)] $ |\tau_{m_j}(H_k \sharp \overline{F_j})| > 2E - 1 / j $ for $ 1 \leqslant j \leqslant k $.
\end{itemize}
The property (b) implies that the sequence of flows $ (\phi_{H_k}^t) $ uniformly converges to a continuous flow $ \varphi^t $ of homeomorphisms of $ S^2 $. Then by (c) we readily get the property $(1)$ from the statement of the theorem. Also note that (b) yields 
$$ \od_{C^0}(\phi_H^t,\varphi^t) = \od_{C^0}(\phi_{H_0}^t,\varphi^t) < \eps, \,\,\,\,\, \forall t \in [0,1]. $$

To see the property $ (2) $ we use the fact that $ \tau_{m_j} $ are Hofer Lipschitz with constant $ 2 $. As in $(2)$, assume that we have a sequence $ \psi_k \in \Ham(S^2) $ such that $ \varphi^1 = (C^0) \lim_{k\rightarrow\infty} \psi_k $, and let $ \psi \in \Ham(S^2) $. Given any $ \delta > 0 $ there exist infinitely many indices $ j $ such that $ d_\H(\psi,\Phi_j) < \delta $. Then for such $ j $ and for every $ k $ have 
\begin{equation} \label{eq:Hofer-tau-ineq}
2d_\H(\psi_k,\psi) \geqslant 2d_\H(\psi_k,\Phi_j) - 2 \delta \geqslant  |\tau_{m_j}(\psi_k \circ \Phi_j^{-1})| - 2\delta .
\end{equation}
Since $ \tau_{m_j} $ is $ C^0 $ continuous on $ \Ham(S^2) $ and moreover extends by continuity to $ \OHam(S^2) $, we get 
\begin{equation} \label{eq:tau-eq}
\lim_{k\rightarrow \infty} \tau_{m_j}(\psi_k \circ \Phi_j^{-1}) = \tau_{m_j}(\varphi^1 \circ \Phi_j^{-1}) = \lim_{k\rightarrow \infty} \tau_{m_j}(\phi_{H_k}^1 \circ \Phi_j^{-1}) .
\end{equation}
But then by $(\ref{eq:Hofer-tau-ineq})$, $(\ref{eq:tau-eq})$ and the property (d) we get
\begin{equation*}
\begin{gathered}
2 \liminf_{k \rightarrow \infty} d_\H(\psi_k,\psi) \geqslant  \lim_{k\rightarrow \infty} |\tau_{m_j}(\psi_k \circ \Phi_j^{-1})| - 2\delta \\
= \lim_{k\rightarrow \infty} |\tau_{m_j}(\phi_{H_k}^1 \circ \Phi_j^{-1})| - 2\delta = \lim_{k\rightarrow \infty} |\tau_{m_j}(H_k \sharp \overline{F_j})| - 2\delta \\
\geqslant 2E - 1 / j - 2 \delta
\end{gathered}
\end{equation*} 
We conclude that the inequality 
$$ \liminf_{k \rightarrow \infty} d_\H(\psi_k,\psi) \geqslant  E - 1 / j - \delta $$
holds for every $ \delta > 0 $ and for infinitely many integer values of $ j $. The property (2) now follows.


\subsection{Proof of Lemma \ref{lemma:main}} \label{subsection:lemma-main-proof}

Recall\footnote{See the beginning of Section \ref{section:Proofs}, right after the formulation of Lemma \ref{lemma:Sikorav-style}.} that for each integer $ m \geqslant 3 $ we have chosen real numbers $ \frac{1}{m+1} < B_m < B_m' < \frac{1}{m} $, and denoted $ C_m = (1-2B_m)/(m-1) $ and $ C_m' = (1-2B_m')/(m-1) $. Also, we denoted $ \sigma_m = c_{m,B_m}^0 $, $ \sigma_m' = c_{m,B_m'}^0 $, and $ \tau_{m} = \sigma_m - \sigma_m' $. 

Let $ m > 6/\eps $ be large enough. One can cover $ S^2 $ by open topological discs $ \cD_j $ ($0 \leqslant j < m$) of equal area $ A_m \in (B_m', \frac{1}{m}) $, such that the their diameters are bounded from above by $ o(1) $ when $ m \rightarrow \infty $ (the discs can in fact be chosen such that the diameters are bounded by $ m^{-1/2} $ up to a constant). We may without loss of generality assume that $ \cD_0 = (x_3)^{-1}([-1/2, -1/2 + A_m)) $. Denote $ L = (x_3)^{-1}(-1/2+B_m) $ and $ L' = (x_3)^{-1}(-1/2+B_m') $, where $ x_3  : S^2 \rightarrow \R $ is the coordinate function as before. We have $ L, L' \subset \cD_0 \subset S^2 $. Choose area-preserving diffeomorphisms $ \phi_j : \cD_0 \rightarrow \cD_j $, for $ 0 \leqslant j < m $ (where $ \phi_0 $ is taken to be the identity map). 

Define $ \hat{a}_j(t) = \inf_{x \in \cD_j} H(x,t) $ and $ \hat{b}_j(t) = \sup_{x \in \cD_j} H(x,t) $, ($ t \in [0,1] $). These functions are continuous on $ [0,1] $ and attain their values in $ (-E,E) $. Moreover if we chose $ m $ to be sufficiently large, we get $ 0 \leqslant \hat{b}_j(t) - \hat{a}_j(t) < \eps/2 $ for every $ j $ and $ t $. Then we can approximate $ \hat{a}_j , \hat{b}_j $ by smooth functions $ a_j, b_j : [0,1] \rightarrow (-E,E) $ such that we have $ \hat{a}_j > a_j $ and $ \hat{b}_j < b_j $ on $ [0,1] $, and such that we still have $ b_j(t) - a_j(t) < \eps/2 $.

Denoting $ \delta = \frac{1}{4E}(E - \| H \|) > 0 $, pick smooth functions $ h,h' : S^2 \rightarrow [-\delta,1] $ which are compactly supported in $ \cD_0 $, have disjoint supports, such that 
$$ \int_{S^2} h\omega = \int_{S^2} h'\omega = 0 ,$$
and such that $ h|_{L} \equiv 1 $, $ h|_{L'} \equiv 0 $, $ h'|_{L} \equiv 0 $, $ h'|_{L'} \equiv 1 $. For each $ j $, define smooth functions $ h_j, h_j' : S^2 \rightarrow [-\delta,1] $, compactly supported in $ \cD_j $, as push-forwards $ h_j = (\phi_j)_* h $ and $ h_j' = (\phi_j)_* h' $. 

Now define normalized Hamiltonian functions $ K,K' : S^2 \times [0,1] \rightarrow \R $ by $$ K(x,t) = \sum_{j=0}^{m-1} (E-b_j(t)) h_j(x) + (-E-a_j(t)) h_j'(x) ,$$ $$ K'(x,t) = \sum_{j=0}^{m-1} (-E-a_j(t)) h_j(x) + (E-b_j(t)) h_j'(x) .$$ Clearly, $ K $ and $ K' $ have commuting Hamiltonian flows, and are both compactly supported in the disjoint union $ \cup_{j=0}^{m-1} \cD_j $. Define Hamiltonian functions $ H_1, H_2 : S^2 \times [0,1] \rightarrow \R $ by $ H_1 = K \sharp H $ and $ H_2 = K' \sharp H $. Let us show that we can choose the desired Hamiltonian $ H' $ to be either $ H_1 $ or $ H_2 $.

Introduce new Hamiltonian functions $$ K_0(x,t) = \sum_{j=0}^{m-1} (E-b_j(t)) h(x) + (-E-a_j(t)) h'(x) ,$$ $$ K_0'(x,t) = \sum_{j=0}^{m-1} (-E-a_j(t)) h(x) + (E-b_j(t)) h'(x) ,$$ which
are compactly supported in $ \cD_0 $. Lemma \ref{lemma:Sikorav-style} ensures that 
\begin{equation} \label{eq:Hofer-estimate}
\begin{gathered}
d_\H(\phi_{K'}^1  (\phi_{K}^1)^{-1}, \phi_{K_0'}^1 (\phi_{K_0}^1)^{-1}) < 3 A_m < 3/m < \eps/2 .
\end{gathered}
\end{equation}


Recall our notation $ \sigma_m = c_{m,B_m}^0 $ and $ \sigma_m' = c_{m,B_m'}^0 $, and $ \tau_{m} = \sigma_m - \sigma_m' $. The inequality $(\ref{eq:Hofer-estimate})$, and the Hofer Lipschitz and Independence of Hamiltonian properties of $ \sigma_m $ yield
\begin{equation*} 
\begin{gathered}
 \sigma_m(\phi_{K'}^1 \circ (\phi_{K}^1)^{-1}) \leqslant \sigma_m(\phi_{K_0'}^1 \circ (\phi_{K_0}^1)^{-1}) + d_\H(\phi_{K'}^1  (\phi_{K}^1)^{-1}, \phi_{K_0'}^1 (\phi_{K_0}^1)^{-1}) \\
 <  \sigma_m(\phi_{K_0'}^1 \circ (\phi_{K_0}^1)^{-1}) + \eps/2 
\end{gathered}
\end{equation*}
Now because $ H_1 = K \sharp H $ and $ H_2 = K' \sharp H $, we therefore get 
\begin{equation} \label{eq:sigma-m-estimate-1}
\begin{gathered}
\sigma_m(H_2 \sharp \overline{H_1})  = \sigma_m(K' \sharp H \sharp \overline{H} \sharp \overline{K}) = \sigma_m(K' \sharp \overline{K}) \\
= \sigma_m(\phi_{K'}^1 \circ (\phi_{K}^1)^{-1})   < \sigma_m(\phi_{K_0'}^1 \circ (\phi_{K_0}^1)^{-1}) + \eps/2  = \sigma_m(K_0' \sharp \overline{K_0})  + \eps /2  .
\end{gathered}
\end{equation}
The value $ \sigma_m(K_0' \sharp \overline{K_0}) $ can be computed explicitly. Indeed, notice that 
\begin{equation*}
\begin{gathered}
 K_0' \sharp \overline{K_0} (x,t) = K_0'(x,t) - K_0 (x,t) \\ 
 = \left( -2mE + \sum_{j=0}^{m-1} (b_j(t) - a_j(t)) \right)h(x) +  \left( 2mE + \sum_{j=0}^{m-1} (a_j(t) - b_j(t)) \right)h'(x) .
\end{gathered}
\end{equation*}
Hence by our choice of the functions $ h $ and $ h' $, we see that for each $ t \in [0,1] $ we have $ K_0' \sharp \overline{K_0} (x,t) =  -2m + \sum_{j=0}^{m-1} (b_j(t) - a_j(t)) $ when 
$ x \in L = z^{-1}(-1/2+B_m) $. Moreover we have $ K_0' \sharp \overline{K_0} (x,t) = 0 $ when $ x \in z^{-1}(-1/2+B_m + i C_m) $ for $ i = 1, \ldots, m-1 $. To see the this, note that $$ B_m + C_m = B_m + (1-2B_m)/(m-1) = ((m-3)B_m + 1)/(m-1) > 1/m > A_m ,$$ and so the circles $ z^{-1}(-1/2+B_m + i C_m) $ lie in the complement of the supports of $ h $ and $ h' $ ($ i = 1, \ldots, m-1 $). Therefore by the Lagrangian control property we get
\begin{equation} \label{eq:sigma-m-estimate-2}
\begin{gathered}
\sigma_m(K_0' \sharp \overline{K_0}) 
= \frac{1}{m} \int_0^1 \left( -2mE + \sum_{j=0}^{m-1} (b_j(t) - a_j(t)) \right) dt \leqslant -2E + \eps/2 .
\end{gathered}
\end{equation}
Combining this estimate with $(\ref{eq:sigma-m-estimate-1})$, we conclude 
\begin{equation} \label{eq:sigma-m-main-ineq}
\sigma_m(H_2 \sharp \overline{H_1}) < \sigma_m(K_0' \sharp \overline{K_0}) + \eps /2  \leqslant -2E + \eps .
\end{equation}
In an analogous way we obtain the lower bound 
\begin{equation}  \label{eq:sigma-m-tag-main-ineq}
 \sigma_m'(H_1 \sharp \overline{H_2}) < -2E+\eps .
\end{equation}
Indeed, similarly as in $(\ref{eq:sigma-m-estimate-1})$ and $(\ref{eq:sigma-m-estimate-2})$, we have estimates
\begin{equation*} 
\begin{gathered}
 \sigma_m'(H_1 \sharp \overline{H_2}) < \sigma_m'(K_0 \sharp \overline{K_0'}) + \eps/2,
\end{gathered}
\end{equation*}
and 
\begin{equation*} 
\sigma_m'( K_0 \sharp \overline{K_0'}) = \frac{1}{m} \int_0^1 \left( -2mE + \sum_{j=0}^{m-1} (b_j(t) - a_j(t)) \right) dt \leqslant -2E + \eps/2,
\end{equation*}
and the inequality $(\ref{eq:sigma-m-tag-main-ineq})$ follows.

By the triangle inequality, $(\ref{eq:sigma-m-main-ineq})$ and $(\ref{eq:sigma-m-tag-main-ineq})$ we have
\begin{equation*}
\begin{gathered}
\tau_m(H_2 \sharp F) = \sigma_m(H_2 \sharp F) - \sigma_m'(H_2 \sharp F) \\
\leqslant (\sigma_m(H_2 \sharp \overline{H_1}) + \sigma_m(H_1 \sharp F)) + (-\sigma_m'(H_1 \sharp F) + \sigma_m'(H_1 \sharp \overline{H_2})) \\
= \tau_m(H_1 \sharp F) + \sigma_m(H_2 \sharp \overline{H_1}) +  \sigma_m'(H_1 \sharp \overline{H_2}) < \tau_m(H_1 \sharp F) - 4E + 2\eps.
\end{gathered}
\end{equation*}
This means that we have either $ \tau_m(H_1 \sharp F) > 2E - \eps $ or $ \tau_m(H_2 \sharp F) < -2E + \eps $. In the first case we put $ H' = H_1 $, and in the second $ H' = H_2 $.
Then $ H' $ satisfies the property (3) from the statement of the lemma. To see the property (2), it is enough to check that $ \| H_1 \| < E $ and $ \| H_2 \| < E $. We have $ H_1(x,t) = K \sharp H (x,t) = K(x,t) + H((\Phi_K^t)^{-1}(x),t) $. Therefore on each $ \cD_j $, since the Hamiltonian flow of $ K $ preserves it, we have 
$$ H_1(x,t) \leqslant \max(E-b_j(t),2E\delta) + \sup_{\cD_j} H(\cdot,t) < E $$ and
$$ -H_1(x,t) \leqslant \max(E+a_j(t),2E\delta) - \inf_{\cD_j} H(\cdot,t) < E $$
(recall that $ a_j(t), b_j(t) \in (-E,E) $, $ a_j(t) < \hat{a}_j(t) = \inf_{\cD_j} H(\cdot,t) $, and  
$ b_j(t) > \hat{b}_j(t) = \sup_{\cD_j} H(\cdot,t) $). Moreover, on the complement of the union $ \cup_j \cD_j $ we have $ H_1(x,t) = H(x,t) $, and so we obtain $ |H_1(x,t)| < E $ as well. We conclude that $ \| H_1 \| < E $. The inequality $ \| H_2 \| < E $ follows similarly. Finally, the property (1) holds if $ m $ is sufficiently large (the Hamiltonian flows of $ K $ and $ K' $ are compactly supported in the disjoint union $ \cup_j \cD_j $, and the diameters of all the $ \cD_j $'s are bounded by $ o(1) $ when $ m \rightarrow \infty $).  

\subsection{Proof of Proposition \ref{prop:Calabi-general}}

Note that the Normalization and Independence of Hamiltonian properties of $ c_m $ imply that the Subadditivity property can be reformulated into the statement that for $ G,H \in C^\infty(M \times [0,1]) $ we have 
\begin{equation} \label{eq:Subadditivity-reformulation}
  c_{m}(G \sharp H) \leqslant c_{m}(G) + c_{m}(H) .
\end{equation}
Then the Monotonicity and Normalization properties imply that for every smooth $ h : M \rightarrow (-\infty,0] $ and $ t \geqslant 0 $ we have 
\begin{equation} \label{eq:c-m-negative-sublinearity-with-error}
c_m(th) \leqslant (t-1)c(h). 
\end{equation}
Indeed, from the Monotinicity and Normalization properties we get $ c_m(h) \leqslant c({\bf{0}}) = 0 $, and then denoting $ n = \floor{t} $, from $(\ref{eq:Subadditivity-reformulation}) $ and Monotonicity we conclude $ c_m(th) \leqslant c_m(nh) \leqslant n c_m(h) \leqslant (t-1) c(h) $. 


Denote $ N_m = \ceil{\omega(M) / \omega(\cD_m)} - 2 $ and note that
\begin{equation} \label{eq:N-m-c-m-lim}
\begin{gathered}
\lim_{m \rightarrow \infty} N_m = + \infty, \\ 
\lim_{m \rightarrow \infty} N_m c_m(-h_m) = - 1 .
\end{gathered}
\end{equation}
We can without loss of generality assume that the open disc $ \cD_m $ has a smooth boundary. Pack $ M $ by open topological discs $ \cD_{m,j} $ ($ 0 \leqslant j \leqslant N_m$) of equal areas, that have smooth boundaries and mutually disjoint closures, such that $ \cD_{m,0} = \cD_m $, and such that the diameters of $ \cD_{m,0}, \ldots, \cD_{m,N_m} $ are bounded from above by $ o(1) $ when $ m \rightarrow \infty $. Choose area-preserving diffeomorphisms $ \phi_{m,j} : \cD_{m,0} \rightarrow \cD_{m,j} $, for $ 0 \leqslant j \leqslant N_m $ (where $ \phi_{m,0} $ is taken to be the identity map). For each $ 0 \leqslant j \leqslant N_m $, define smooth functions $ h_{m,j}: S^2 \rightarrow [0,1] $, compactly supported in $ \cD_{m,j} $, as push-forwards $ h_{m,j} = (\phi_{m,j})_* h_m $. 

Now let $ H \in C^\infty(M \times [0,1]) $, and let $ \eps > 0 $ be arbitrary. Choose some $ E > \| H \| $. 
Then, choose $ m $ to be large enough, such that in particular we have $ 3\omega(\cD_m) < \eps $ and $ \omega(\cD_m) \| H \| < \eps $.
Define
$$ \hat{a}_{j}(t) = \inf_{\cD_{m,j}} H(\cdot,t) ,$$
$$ \hat{b}_{j}(t) = \sup_{\cD_{m,j}} H(\cdot,t) ,$$  
($ t \in [0,1] $). These functions are continuous on $ [0,1] $ and attain their values in $ (-E,E) $. Moreover if we choose $ m $ to be sufficiently large, we get $ 0 \leqslant \hat{b}_{j}(t) - \hat{a}_{j}(t) < \eps $ for every $ j $ and $ t $. Then approximate $ \hat{a}_j , \hat{b}_j $ by smooth functions $ a_j, b_j  : [0,1] \rightarrow (-E,E) $ such that we have $ \hat{a}_j > a_j $ and $ \hat{b}_j < b_j $ on $ [0,1] $, and such that we still have $ b_j(t) - a_j(t) < \eps $. Denote 
$$ I = \int_0^1 \int_{M} H(x,t) \omega\, dt .$$ Then $ I $ is well--approximated by $ \omega(\cD_m) \left(\sum_{j=0}^{N_m} \int_0^1 b_j(t) \, dt \right) $. Indeed, note that the complement $ K := M \setminus \cup_{j=0}^{N_m} \cD_{m,j} $ of the union of the discs, has a small area:
$$ \omega(K) = \omega(M) - (N_m + 1) \omega(\cD_m) \leqslant \omega(\cD_m) .$$
Moreover,  
$$ I = \int_0^1 \int_{M} H(x,t) \omega\, dt = \left( \sum_{j=0}^{N_m} \int_0^1 \int_{\cD_{m,j}} H(x,t) \omega\, dt \right) + \int_0^1 \int_{K} H(x,t) \omega\, dt .$$
We have 
$$  \left| \int_0^1 \int_{K} H(x,t) \omega\, dt \right| \leqslant \omega(K) \| H \| \leqslant \omega(\cD_m) \| H \| < \eps. $$
Also, by 
$$ a_j(t) < \inf_{\cD_{m,j}} H(\cdot ,t) \leqslant \sup_{\cD_{m,j}} H(\cdot ,t) < b_j(t) $$ 
and by $ b_j(t) - a_j(t) < \eps $, we have 
$$ | H(x,t) - b_j(t) | < \eps $$ 
on $ \cD_{m,j} $, and therefore 
$$ \left| \int_0^1 \int_{\cD_{m,j}} H(x,t) \omega\, dt  - \omega(\cD_m) \int_0^1 b_j(t) \, dt \right| < \eps \omega(\cD_m)  $$
for each $ j $. We conclude 
\begin{equation} \label{eq:I-approx-c}
 \left| I - \omega(\cD_m) \left(\sum_{j=0}^{N_m} \int_0^1 b_j(t) \, dt \right) \right| < (1+ \omega(M)) \eps .
\end{equation}

Define smooth Hamiltonian functions $ K,K',K_0,K_0' : S^2 \times [0,1] \rightarrow \R $ by 
$$ K(x,t) = \sum_{j=0}^{N_m} (E-b_j(t)) h_{m,j}(x) , \,\,\,\,\,\, K'(x,t) = \sum_{j=0}^{N_m} (-E-a_j(t)) h_{m,j}(x) ,$$ 
$$ K_0(x,t) = \sum_{j=0}^{N_m} (E-b_j(t)) h_{m}(x) = \left( (N_m + 1)E - \sum_{j=0}^{N_m} b_j(t) \right) h_m(x) ,$$ 
$$ K_0'(x,t) = \sum_{j=0}^{N_m} (-E-a_j(t)) h_{m}(x) = \left( -(N_m + 1)E - \sum_{j=0}^{N_m} a_j(t) \right) h_m(x).$$ 
Similarly as in the proof of Lemma \ref{lemma:main}, applying the version of Lemma \ref{lemma:Sikorav-style} for the universal cover $ \Ham(M,\omega) $ as described in Section \ref{subsubsection:Sikorav-for-universal-cover}, we conclude that 
$$ d_\H([(\phi_{K_0}^t)],[(\phi_{K}^t)]) \, , \, d_\H([(\phi_{K_0'}^t)],[(\phi_{K'}^t)]) \, < \, 3\omega(\cD_m) < \eps  . $$
It is easy to see that we have
$$ \int_0^1 \int_M K(x,t) \omega \, dt = \int_0^1 \int_M K_0(x,t) \omega \, dt $$
and 
$$ \int_0^1 \int_M K'(x,t) \omega \, dt = \int_0^1 \int_M K_0'(x,t) \omega \, dt .$$
Hence by the Hofer Lipschitz and Independence of Hamiltonian properties of $ c_m $ we get 
\begin{equation*} 
| c_m(K) - c_m(K_0)|, \, |c_m(K')-c_m(K_0')| < \eps.  
\end{equation*}
Also, by $(\ref{eq:c-m-negative-sublinearity-with-error})$, and by the Normalization and Independence of Hamiltonian properties we have
$$ c_m(\overline{K_0}) = c_m(-K_0) \leqslant \left( (N_m + 1)E - 1 - \sum_{j=0}^{N_m} \int_0^1 b_j(t) \, dt \right) c_m(-h_m) ,$$
$$ c_m(K_0') \leqslant  \left( (N_m + 1)E - 1 + \sum_{j=0}^{N_m} \int_0^1 a_j(t) \, dt \right) c_m(-h_m) .$$
Therefore we conclude 
$$ c_m(\overline{K}) < \left( (N_m + 1)E - 1 - \sum_{j=0}^{N_m} \int_0^1 b_j(t) \, dt \right) c_m(-h_m) + \eps =: A_m ,$$
$$ c_m(K') <  \left( (N_m + 1)E - 1 + \sum_{j=0}^{N_m} \int_0^1 a_j(t) \, dt \right) c_m(-h_m) + \eps =: B_m .$$
Note that
\begin{equation*}
\begin{gathered}
 A_m + B_m = \left( 2(N_m + 1)E - 2 + \sum_{j=0}^{N_m} \int_0^1 (a_j(t)-b_j(t)) \, dt \right) c_m(-h_m) + 2 \eps \\
 < (2N_m E + 2E - 2 - (N_m + 1) \eps ) c_m(-h_m) + 2 \eps 
\end{gathered}
\end{equation*}
Therefore, if $ m $ is large enough then in view of $(\ref{eq:N-m-c-m-lim})$ we get $ A_m + B_m < -2E+ 4\eps $.
Define Hamiltonian functions $ H_1, H_2 : S^2 \times [0,1] \rightarrow \R $ by $ H_1 = K \sharp H $ and $ H_2 = K' \sharp H $. Then it is easy to see that we have $ \| H_1 \|, \| H_2 \| \leqslant E $.
In particular we get $ -E \leqslant c_m(H_1), c_m (H_2) \leqslant E $. Moreover, we have $ c_m(H) \leqslant c_m(H_1) + c_m (\overline{K}) < c_m(H_1) + A_m $ and 
$ c_m(H_2) \leqslant c_m(H) + c_m (K') < c_m(H) + B_m $. 

Let us summarize what we have:
\begin{itemize} 
 \item $A_m + B_m < -2E + 4\eps$,
 \item $-E \leqslant c_m(H_1), c_m (H_2) \leqslant E$,
 \item $c_m(H) < c_m(H_1) + A_m$,
 \item $c_m(H_2) < c_m(H) + B_m$.
\end{itemize}
This implies that $ c_m(H) $ lies in the interval $ (-E-B_m,E+A_m) $ whose length is $ (E+A_m)-(-E-B_m) = 2E + A_m + B_m < 4\eps $. In particular, 
$$ |c_m(H) - (E+A_m)| < 4\eps .$$
We have 
\begin{equation*}
\begin{gathered}
E + A_m =  E + \left( (N_m + 1)E - 1 - \sum_{j=0}^{N_m} \int_0^1 b_j(t) \, dt \right) c_m(-h_m) + \eps \\
= (N_m c_m(-h_m) + 1) E + (E-1)c_m(-h_m) + \\
 + \left( I - \omega(\cD_m) \sum_{j=0}^{N_m} \int_0^1 b_j(t) \, dt \right) (c_m(-h_m)/\omega(\cD_m)) -  \\
 - ((\omega(M))^{-1} + c_m(-h_m)/\omega(\cD_m)) I + \eps + (\omega(M))^{-1} I .
\end{gathered}
\end{equation*}
Hence by $(\ref{eq:N-m-c-m-lim})$, $(\ref{eq:I-approx-c})$ and the Locality property we have $$ | E + A_m - (\omega(M))^{-1}I | < (3+(\omega(M))^{-1}) \eps, $$
provided that $ m $ is large enough. Therefore we get $$ |c_m(H) - (\omega(M))^{-1}I | < (7+(\omega(M))^{-1}) \eps $$ for large enough $ m $. Since $ \eps > 0 $ is arbitrary, this shows $(\ref{eq:Calabi-property})$.

\subsection{Proof of Theorem \ref{thm:quotient-emb-flats}} 

%

Denote by $ \pi: \FHomeo(S^2) \rightarrow \FHomeo(S^2) / \Hameo(S^2) $ the natural projection homomorphism. For proving the theorem it is enough to find a group homomorphism $ \Phi : l^\infty \rightarrow \FHomeo(S^2) $, such that for each $ s \in l^\infty $ we have
\begin{equation} \label{eq:quotient-emb-flats}
 \| \pi \circ \Phi (s) \|_\cH = \limsup_{k \rightarrow \infty} |s_k| ,
\end{equation}
and such that $ \Phi(c_0) \subset \Hameo(S^2) $. Indeed, then $ \pi \circ \Phi $ naturally descends to an isometric embedding $ l^\infty / c_0 \rightarrow \FHomeo(S^2) / \Hameo(S^2) $.
After constructing $ \Phi $, the proof of the equality $(\ref{eq:quotient-emb-flats})$ will be divided into proving the inequalities ``$\geqslant$'' and ``$\leqslant$''.
For the first inequality we will use properties of the functionals $ c_{k,B}^0 $, in particular the Calabi property stated in Theorem \ref{thm:Calabi-for-c0kB}. For the opposite one, we will apply a soft argument which uses Lemma \ref{lemma:Sikorav-style}.

Recall\footnote{See the beginning of Section \ref{section:Proofs}, right after the formulation of Lemma \ref{lemma:Sikorav-style}.} that for each integer $ m \geqslant 3 $ we have chosen real numbers $ \frac{1}{m+1} < B_m < B_m' < \frac{1}{m} $, and denoted $ C_m = (1-2B_m)/(m-1) $ and $ C_m' = (1-2B_m')/(m-1) $. Also, we denoted $ \sigma_m = c_{m,B_m}^0 $, $ \sigma_m' = c_{m,B_m'}^0 $, and $ \tau_{m} = \sigma_m - \sigma_m' $. For each $ m \geqslant 3 $ choose some $ A_m \in (B_m',\frac{1}{m}) $ and consider the open disc $$ \cD_m := (x_3)^{-1}([-1/2,-1/2+A_m)).$$ 
Let 
$$ \cA_m := (x_3)^{-1}((1/(m+1),A_m)) \subset \cD_m \subset S^2 $$ 
denote the spherical annulus, and let $ L_m := (x_3)^{-1}(B_m) $ and $ L_m' := (x_3)^{-1}(B_m') $ be circles inside $ \cA_m $. 

%

For each $ m \geqslant 3 $, pack $ M $ by open topological discs $ \cD_{m,j} $ ($ 0 \leqslant j \leqslant m-1$) of equal areas, that have smooth boundaries and mutually disjoint closures, such that $ \cD_{m,0} = \cD_m $, and such that the diameters of $ \cD_{m,0}, \ldots, \cD_{m,m-1} $ are bounded from above by $ o(1) $ when $ m \rightarrow \infty $. Choose area-preserving diffeomorphisms $ \phi_{m,j} : \cD_{m,0} \rightarrow \cD_{m,j} $, for $ 0 \leqslant j \leqslant m-1 $ (where $ \phi_{m,0} $ is taken to be the identity map).

In order to prepare for the construction, we inductively choose a sequence of indices $ m_1 < m_2 < \ldots $, and also choose a sequence of smooth functions $ h_k : S^2 \rightarrow \R $, such that each $ h_k $ depends only on the $ x_3 $-coordinate and is supported in $ \cA_{m_k} $. Moreover, some auxiliary functions $ F_{k,s} $ will be introduced, and later they will be useful in estimating the Hofer norm.

At the first step we choose $ m_1 \geqslant 3 $ arbitrarily, and then choose smooth functions $ h_1', h_1'': S^2 \rightarrow [0,1] $ that depend only on the coordinate $ x_3 $, such that their disjoint supports are contained in the annulus $ \mathcal{A}_{m_1} $, such that $ h_1'|_{L_{m_1}} \equiv 1 $, $ h_1'|_{L_{m_1}'} \equiv 0 $, $ h_1''|_{L_{m_1}} \equiv 0 $, $ h_1''|_{L_{m_1}'} \equiv 1 $, and $ \int_{S^2} h_1' \omega = \int_{S^2} h_1'' \omega $. Then define $ h_1 : S^2 \rightarrow \R $ by $ h_1 := h_1'-h_1'' $, and for each $ s \in [-1,1] $ define $ F_{1,s} \in C^\infty(S^2 \times [0,1]) $
as $ F_{1,s}(x,t) = sh_1(x) $. This finishes the first step.

Now we describe a step $ k $ when $ k > 1 $. In the previous step we have constructed the family $ (F_{k-1,s} (\cdot,\cdot))_{s \in [-1,1]^{k-1}} $ of functions in $ C^\infty(S^2 \times [0,1]) $. By the construction and by compactness considerations, there exists a constant $ C_{k-1} $ such that for every $ t \in [0,1] $ and $ s \in [-1,1]^{k-1} $, the function $ F_{k-1,s}(\cdot,t) : S^2 \rightarrow \R $ is 
$ (C_{k-1} \max_{1 \leqslant i \leqslant k-1} |s_i| ) $--Lipschitz. Also, by the construction, for each $ s \in [-1,1]^{k-1} $ the Hamiltonian $ F_{k-1,s} $ in normalized, that is  
\begin{equation} \label{eq:nrmlzd-F} 
\int_{S^2} F_{k-1,s}(x,t) \omega  = 0 
\end{equation} 
for every $ t \in [0,1] $. Moreover we have 
\begin{equation} \label{eq:Fks-estimate}
 \| F_{k-1,s} \| \leqslant \max_{1 \leqslant i \leqslant k-1} (1+ 2^{-i}-2^{-k}) |s_i| .
\end{equation}
Hence we can choose $ m_{k} > 2 m_{k-1} $ sufficiently large such that for every $ s \in [-1,1]^{k-1} $, $ t \in [0,1] $ and $ j $ we have 
\begin{equation} \label{eq:osc-F}
\underset{\cD_{m_{k},j}}{\osc} F_{k-1,s}(\cdot,t) =  \sup_{\cD_{m_{k},j}} F_{k-1,s}(\cdot,t) - \inf_{\cD_{m_{k},j}} F_{k-1,s}(\cdot,t) \leqslant \frac{\|s\|_\infty}{2^{k+6}} ,
\end{equation}
and also denoting by $ Z_k $ the complement of the union $ \cup_{j=0}^{m_k-1} \cD_{m_k,j} $, we have 
\begin{equation} \label{eq:F-int-rest}
 \int_{Z_k} |F_{k-1,s}(x,t)| \omega \leqslant \frac{\|s\|_\infty}{2^{k+5}} 
\end{equation}
for every $ t \in [0,1] $. Moreover, we may assume that $ m_k $ is sufficiently large so that $ | m_i \tau_{m_k}(h_i) | \leqslant 2^{-k} $ for every $ 1 \leqslant i \leqslant k-1 $ (since $ h_k $ depends only on the $ x_3 $-coordinate, for verifying 
that condition it is enough to refer only to the Lagrangian control property).

For each $ s \in [-1,1]^{k-1} $ and $ j $ choose a smooth function $ a_{k,j}( \cdot ; s) : [0,1]  \rightarrow \R $, such that\footnote{The dependence of $ a_{k,j}(t;s) $ on $ s \in [-1,1]^{k-1} $ is not required to be smooth or even continuous. An important property of $ a_{k,j} $ is the boundedness which is uniform in $ s $, and it follows from $(\ref{eq:choice-of-akj})$.}
\begin{equation} \label{eq:choice-of-akj}
\inf_{\cD_{m_{k},j}} F_{k-1,s}(\cdot,t) - \frac{\|s\|_\infty}{2^{k+6}} \leqslant a_{k,j}(t; s)  \leqslant  \sup_{\cD_{m_{k},j}} F_{k-1,s}(\cdot,t) + \frac{\|s\|_\infty}{2^{k+6}} 
\end{equation}
By $(\ref{eq:nrmlzd-F})$ and $(\ref{eq:F-int-rest})$ we have 
\begin{equation*}
\left| \sum_{j=0}^{m_k-1} \int_{\cD_{m_k,j}} F_{k-1,s}(x,t) \omega \right| \leqslant \frac{\|s\|_\infty}{2^{k+5}}, 
\end{equation*} 
hence by $(\ref{eq:osc-F})$ and $(\ref{eq:choice-of-akj})$ we get
\begin{equation*}
\omega(\cD_{m_k}) \left| \sum_{j=0}^{m_{k}-1} a_{k,j} (t;s) \right| \leqslant \frac{\|s\|_\infty}{2^{k+4}},
\end{equation*} 
for every $ t \in [0,1] $. Therefore the average
\begin{equation*}
A_k(t;s):= \frac{1}{m_k} \sum_{j=0}^{m_{k}-1} a_{k,j} (t;s)  
\end{equation*}
verifies
\begin{equation} \label{eq:Ak-average-estimate}
\begin{gathered}
|A_k(t;s)| \leqslant \frac{\|s\|_\infty}{2^{k+4}m_k\omega(\cD_{m_k})} \leqslant \frac{\|s\|_\infty}{2^{k+3}} .
\end{gathered}
\end{equation}
Denoting $ \tilde a_{k,j}(t;s) := a_{k,j}(t;s) - A_k(t;s) $, we have
\begin{equation} \label{eq:avrg-akj}
\sum_{j=0}^{m_k-1} \tilde a_{k,j}(t;s) = 0
\end{equation}
for every $ t $ and $ s $. Choose smooth functions $ h_k', h_k'': S^2 \rightarrow [0,1] $ that depend only on the coordinate $ x_3 $, such that their disjoint supports are contained in the annulus $ \mathcal{A}_{m_k} $, such that $ h_k'|_{L_{m_k}} \equiv 1 $, $ h_k'|_{L_{m_k}'} \equiv 0 $, $ h_k''|_{L_{m_k}} \equiv 0 $, $ h_k''|_{L_{m_k}'} \equiv 1 $, and $ \int_{S^2} h_k' \omega = \int_{S^2} h_k'' \omega $. 

Now let $ s = (s_1,\ldots,s_k) \in [-1,1]^k $, and denote $ s'= (s_1,\ldots,s_{k-1}) \in [-1,1]^{k-1} $. If $ s_k = 0 $ then we set $ F_{k,s} := F_{k-1,s'} $. Otherwise, define the Hamiltonian function 
$$ G_{k,s}(x,t) = \sum_{j=0}^{m_{k}-1} \left( (s_k - \tilde a_{k,j} (t;s'))  (\phi_{m_k,j})_* h_k'(x) + (-s_k - \tilde a_{k,j} (t;s'))  (\phi_{m_k,j})_* h_k''(x) \right) ,$$
and then set
$$ F_{k,s} := G_{k,s} \sharp F_{k-1,s'} . $$
By $(\ref{eq:avrg-akj})$ and by the choice of $ h_k' $ and $ h_k'' $ we get that $ G_{k,s} $ is normalized, and hence $ F_{k,s} $ is normalized as well.

Consider the case when $ s_k \neq 0 $. For every $ (x,t) \in S^2 \times [0,1] $ we have $ F_{k,s}(x,t) = G_{k,s}(x,t) + F_{k-1,s'}((\phi_{G_{k,s}}^t)^{-1}(x),t) $. Hence for $ x \notin \cup_j \cD_{m_k,j} $ we have $ F_{k,s}(x,t) = F_{k-1,s'}(x,t) $. Otherwise, if $ x \in \cD_{m_k,j} $ for some $ j $, then in the case when $ x $ lies outside the supports of $ (\phi_{m_k,j})_* h_k' $ and $ (\phi_{m_k,j})_* h_k'' $, we get the same conclusion $ F_{k,s}(x,t) = F_{k-1,s'}(x,t) $. It remains to treat separately the cases of $ x \in \supp((\phi_{m_k,j})_* h_k') $ and $ x \in \supp((\phi_{m_k,j})_* h_k'') $, where our aim is to estimate $ F_{k,s}(x,t) $. Assuming $ x \in \supp((\phi_{m_k,j})_* h_k') $, we get $ x \notin \supp((\phi_{m_k,j})_* h_k'') $, and hence for a given $ t \in [0,1] $, denoting $ \lambda = (\phi_{m_k,j})_* h_k' (x) \in [0,1] $ and $ y = (\phi_{G_{k,s}}^t)^{-1}(x) \in \cD_{m_k,j} $, we have 
\begin{equation*}
\begin{gathered}
F_{k,s}(x,t) = \lambda (s_k - \tilde a_{k,j} (t;s'))  + F_{k-1,s'}(y,t) \\
= \lambda (s_k - a_{k,j} (t;s') + A_k(t;s'))  + F_{k-1,s'}(y,t) \\ 
= (1- \lambda)F_{k-1,s'}(y,t) + \lambda s_k + \lambda (F_{k-1,s'}(y,t) - a_{k,j} (t;s') + A_k(t;s')). 
\end{gathered}
\end{equation*}
But by $(\ref{eq:choice-of-akj})$ and $(\ref{eq:Ak-average-estimate})$ we have 
$$ |F_{k-1,s'}(y,t) - a_{k,j} (t;s') + A_k(t;s')| \leqslant \frac{\|s\|_\infty}{2^{k+2}}. $$
Hence by $(\ref{eq:Fks-estimate})$ we conclude
\begin{equation*}
\begin{gathered}
 | F_{k,s}(x,t) | \leqslant \max_{1 \leqslant i \leqslant k} (1+ 2^{-i}-2^{-k}) |s_i| + 2^{-k-2} \|s\|_\infty \\
 \leqslant \max_{1 \leqslant i \leqslant k} (1+ 2^{-k-2})(1+ 2^{-i}-2^{-k}) |s_i| \leqslant \max_{1 \leqslant i \leqslant k} (1+ 2^{-i} - 2^{-k-1}) |s_i|.
\end{gathered}
\end{equation*}
We obtained this estimate under the assumption of $ x \in \supp((\phi_{m_k,j})_* h_k') $. The case of $ x \in \supp((\phi_{m_k,j})_* h_k'') $ is completely analogous, and the same conclusion follows.
Thus we get
\begin{equation} \label{eq:Fks-Hofer-main-est}
\| F_{k,s} \| \leqslant \max_{1 \leqslant i \leqslant k} (1+ 2^{-i} - 2^{-k-1}) |s_i| . 
\end{equation}
Now if $ s_k = 0 $, then $(\ref{eq:Fks-Hofer-main-est})$ holds as well, by the inductive assumption. Finally, define the function $ h_k \in C^\infty(S^2) $ by $ h_k := h_k' - h_k'' $. This finishes the step $ k $.

Note that if we have $ s \in [-1,1]^k $ and $ s_k = 0 $, then denoting $ s':= (s_1,\ldots,s_{k-1}) \in [-1,1]^{k-1} $, by the construction we have $ F_{k,s} \equiv F_{k-1,s'} $. Hence for each $ s \in l^\infty $ which has only a finite number of non-zero coordinates, we can define $ F_s(x,t) := F_{\ell,\sigma}(x,t) $ where $ \sigma = (s_1,\ldots,s_\ell) \in [-1,1]^\ell $, and $ \ell $ is such that $ s_k = 0 $ for $ k > \ell $. 

For every $ s = (s_1,s_2, \ldots) \in l^\infty $ denote $ {}^ks:= (s_1,s_2, \ldots, s_k, 0,0, \ldots) \in l^\infty $ to be the element of $ l^\infty $ whose first $ k $ coordinates coincide with those of $ s $ and whose remaining coordinates are $ 0 $. Moreover, we define $ s^k := s - {}^{(k-1)}s = (0,\ldots,0,s_{k},s_{k+1},\ldots) \in l^\infty $. 

For each $ s \in l^\infty $ which has only a finite number of non-zero coordinates, define $ \Phi (s) \in \Ham(S^2) $ to be the time-$1$ map of the autonomous Hamiltonian $ \sum m_k s_k h_k $. 
Then, for every $ s \in l^\infty $ define 
$$ \Phi(s) := (C^0)\lim_{k \rightarrow \infty} \Phi ({}^ks) \in \overline{\Ham}(S^2) .$$
We claim that for each $ s \in l^\infty $ with $ \| s \|_\infty \leqslant 1 $ we in fact have $ \Phi(s) \in \FHomeo(S^2) $ and moreover 
\begin{equation} \label{eq:Phi-s-Hofer-estimate}
\| \Phi(s) \|_{\cH} \leqslant \sup_{k} \, (1+ 2^{-k}) |s_k| + 3 \sum_{s_k \neq 0} \frac{1}{m_k} .
\end{equation}
For proving this, it is enough to show that for every $ s \in l^\infty $, $ \| s \|_\infty \leqslant 1 $, such that only a finite number of its coordinates are non-zero, we have
\begin{equation} \label{eq:Phi-s-00-Hofer-estimate}
\| \Phi(s) \|_{\H} \leqslant \max_{k} \, (1+ 2^{-k}) |s_k| + 3 \sum_{s_k \neq 0} \frac{1}{m_k} .
\end{equation}
Indeed, having $(\ref{eq:Phi-s-00-Hofer-estimate})$, for any $ s \in l^\infty $, $ \| s \|_\infty \leqslant 1 $, we get
$$ \Phi(s) = (C^0)\lim_{\ell \rightarrow \infty} \Phi ({}^\ell s) ,$$
and
$$ \| \Phi({}^\ell s) \|_{\H} \leqslant \sup_{k} \, (1+ 2^{-k}) |s_k| + 3 \sum_{s_k \neq 0} \frac{1}{m_k}  $$
for each $ \ell \geqslant 1 $, hence $ \Phi(s) \in \FHomeo(S^2) $, and $(\ref{eq:Phi-s-Hofer-estimate})$ follows just by the definition of the norm $ \| \cdot \|_\cH $ on $ \FHomeo(S^2) $.

To show $(\ref{eq:Phi-s-00-Hofer-estimate})$, it is enough to prove that 
\begin{equation} \label{eq:Phi-s-00-Hofer-estimate-auxil}
d_\H(\Phi(s), \phi_{F_{s}}^1) \leqslant 3 \sum_{s_k \neq 0} \frac{1}{m_k} .
\end{equation}
Indeed, then $(\ref{eq:Phi-s-00-Hofer-estimate})$ will readily follow from $(\ref{eq:Phi-s-00-Hofer-estimate-auxil})$ and $(\ref{eq:Fks-Hofer-main-est})$. Now, let us prove $(\ref{eq:Phi-s-00-Hofer-estimate-auxil})$ by induction in $ \ell:= \max \left( \{1\} \cup \{ k \, | \, s_k \neq 0 \} \right) $. In the case of $ \ell = 1 $ we have $ \Phi(s) = \phi_{F_{s}}^1 $ and hence $(\ref{eq:Phi-s-00-Hofer-estimate-auxil})$ clearly holds. If $ \ell > 1 $ then denoting 
$ \sigma := (s_1, \ldots, s_{\ell}) \in [-1,1]^{\ell} $, $ \sigma' := (s_1, \ldots, s_{\ell-1}) \in [-1,1]^{\ell-1} $ and $ s' := {}^{(\ell-1)}s = (s_1, \ldots, s_{\ell-1},0,0,\ldots) \in l^\infty $, from the induction hypothesis it follows that 
\begin{equation*} 
d_\H(\Phi(s'), \phi_{F_{s'}}^1) \leqslant 3 \sum_{s_k \neq 0 \, , \, k < \ell} \frac{1}{m_k} .
\end{equation*}
In addition we have 
$$ \Phi(s) = \phi_{m_\ell s_\ell h_\ell}^1 \Phi(s') $$
and 
$$  \phi_{F_{s}}^1 = \phi_{F_{\ell,\sigma}}^1 = \phi_{G_{\ell,\sigma}}^1 \phi_{F_{\ell-1,\sigma'}}^1 = \phi_{G_{\ell,\sigma}}^1 \phi_{F_{s'}}^1 $$
where
$$ G_{\ell,\sigma}(x,t) = \sum_{j=0}^{m_{\ell}-1} \left( (s_\ell - \tilde a_{\ell,j} (t;\sigma'))  (\phi_{m_\ell,j})_* h_\ell'(x) + (-s_\ell - \tilde a_{\ell,j} (t;\sigma'))  (\phi_{m_\ell,j})_* h_\ell''(x) \right) $$
as before. By the construction we always have
$$ \sum_{j=0}^{m_{\ell}-1}  \tilde a_{\ell,j} (t;\sigma') = 0, $$
hence a direct application of Lemma \ref{lemma:Sikorav-style} yields
$$ d_\H(\phi_{G_{\ell,\sigma}}^1,\phi_{m_\ell s_\ell h_\ell}^1) < \frac{3}{m_\ell} .$$
Hence we conclude $(\ref{eq:Phi-s-00-Hofer-estimate-auxil})$ by the triangle inequality.

We have proved $(\ref{eq:Phi-s-Hofer-estimate})$. Now as a corollary we get that for every $ s \in l^\infty $ we have $ \Phi(s) \in \FHomeo(S^2) $ and
\begin{equation} \label{eq:Phi-s-Hofer-estimate-unrestrict}
\| \Phi(s) \|_{\cH} \leqslant \sup_{k} (1+ 2^{-k}) |s_k| + 3 (\| s \|_\infty + 1) \sum_{s_k \neq 0} \frac{1}{m_k} .
\end{equation}
Indeed, if $ s \in l^\infty $ then let $ N $ to be equal to the integer part of $ \| s \|_\infty $, and denote $ \tilde s := s/(N+1) $. Then clearly $ \| \tilde s \|_\infty \leqslant 1 $, 
hence $ \Phi(\tilde s) \in \FHomeo(S^2) $ and so $ \Phi(s) = (\Phi(\tilde s))^{N+1} \in \FHomeo(S^2) $, and moreover by $(\ref{eq:Phi-s-Hofer-estimate})$ we conclude
\begin{equation*}
\begin{gathered}
\| \Phi(s) \|_{\cH} \leqslant (N+1) \| \Phi(\tilde s) \|_{\cH} \leqslant (N+1) \sup_{k} (1+ 2^{-k}) |\tilde s_k| + 3 (N + 1) \sum_{\tilde s_k \neq 0} \frac{1}{m_k} \\
\leqslant \sup_{k} (1+ 2^{-k}) |s_k| + 3(\| s \|_\infty + 1) \sum_{s_k \neq 0} \frac{1}{m_k} .
\end{gathered}
\end{equation*}

Recall that we denoted by $ \pi : \FHomeo(S^2) \rightarrow \cG = \FHomeo(S^2) / \Hameo(S^2) $ the natural projection homomorphism. For every $ s \in l^\infty $ and every $ \ell \geqslant 1 $ we have
$$ \| \pi \circ \Phi (s) \|_\cH = \| \pi \circ \Phi (s^\ell) \|_\cH ,$$
and by $(\ref{eq:Phi-s-Hofer-estimate-unrestrict})$ we have
$$ \limsup_{\ell \rightarrow \infty} \| \pi \circ \Phi (s^\ell) \|_\cH \leqslant  \limsup_{\ell \rightarrow \infty} \| \Phi (s^\ell) \|_\cH \leqslant \limsup_{\ell \rightarrow \infty} | s_\ell | . $$
Thus we get the estimate 
\begin{equation} \label{eq:Phi-s-Hofer-estimate-main}
 \| \pi \circ \Phi (s) \|_\cH \leqslant \limsup_{\ell \rightarrow \infty} | s_\ell | 
\end{equation}
for every $ s \in l^\infty $. In order to conclude $(\ref{eq:quotient-emb-flats})$ it remains to show the opposite inequality, and for this we will apply properties of the functionals $ c_{k,B}^0 $.

One of important properties of the functionals $ \tau_m = \sigma_m - \sigma_m' = c_{m,B_m}^0 - c_{m,B_m'}^0 $ is that they are $ C^0 $-continuous on $ \Ham(S^2) $ and that they extend by continuity to $ \OHam(S^2) $. Consider any $ s \in l^\infty $ having a finite number of non-zero coordinates. Then by the Lagrangian control property for $ c_{m,B_m}^0 $ and $ c_{m,B_m'}^0 $ we get
\begin{equation*}
\begin{gathered}
\tau_{m_k} (\Phi(s)) = \tau_{m_k} \left( \sum_i m_i s_i h_i \right) = \sum_{i=1}^\infty m_i s_i \tau_{m_k}(h_i) =  m_k s_k \tau_{m_k}(h_k) + \sum_{i=1}^{k-1} m_i s_i \tau_{m_k}(h_i) 
\end{gathered}
\end{equation*}
Now, we have $ m_k \tau_{m_k}(h_k) = 2 $ and $ | m_i \tau_{m_k}(h_i) | \leqslant 2^{-k} $ for $ 1 \leqslant i \leqslant k-1 $, hence we get
\begin{equation*}
\begin{gathered}
\tau_{m_k} (\Phi(s)) \geqslant 2s_k - 2^{-k} k \| s \|_\infty .
\end{gathered}
\end{equation*}
Now, if $ \psi \in \Ham(M,\omega) $ is arbitrary, then 
\begin{equation*}
\begin{gathered}
\tau_{m_k} (\Phi(s) \psi^{-1}) = \sigma_{m_k}(\Phi(s) \psi^{-1}) - \sigma_{m_k}'(\Phi(s) \psi^{-1}) \\
\geqslant (\sigma_{m_k}(\Phi(s)) - \sigma_{m_k}(\psi)) - (\sigma_{m_k}'(\Phi(s)) + \sigma_{m_k}'(\psi^{-1})) \\
= \tau_{m_k}(\Phi(s)) - (\sigma_{m_k}(\psi) + \sigma_{m_k}'(\psi^{-1})) \geqslant 2s_k - 2^{-k} k \| s \|_\infty - (\sigma_{m_k}(\psi) + \sigma_{m_k}'(\psi^{-1})).
\end{gathered}
\end{equation*}
As a corollary, for any $ s \in l^\infty $, any $ \ell \geqslant k \geqslant 1 $, and every $ \psi \in \Ham(S^2) $ we get
\begin{equation*}
\tau_{m_k} (\Phi({}^\ell s) \psi^{-1}) \geqslant 2s_k - 2^{-k} k \| s \|_\infty - (\sigma_{m_k}(\psi) + \sigma_{m_k}'(\psi^{-1})).
\end{equation*}
But then the $ C^0 $ continuity of $ \tau_{m_k} $ implies 
\begin{equation} \label{eq:Psi-Hofer-prelim}
\tau_{m_k} (\Phi(s) \psi^{-1}) \geqslant 2s_k - 2^{-k} k \| s \|_\infty - (\sigma_{m_k}(\psi) + \sigma_{m_k}'(\psi^{-1})).
\end{equation}
Now let $ \varphi \in \Hameo(S^2) $, and choose a sequence of $ \psi_j \in \Ham(S^2) $ such that $ \varphi = (C^0)\lim_{j \rightarrow \infty} \psi_j $ and
$ d_\H(\psi_i,\psi_j) \leqslant 1/i $ for $ j \geqslant i \geqslant 1 $. Then by $ (\ref{eq:Psi-Hofer-prelim})$ and by the Hofer Lipschitz and Independence of Hamiltonian properties of $ \sigma_m $, $ \sigma_m' $, we get
\begin{equation*} 
\tau_{m_k} (\Phi(s) \psi_j^{-1}) \geqslant 2s_k - 2^{-k} k \| s \|_\infty - (\sigma_{m_k}(\psi_i) + \sigma_{m_k}'(\psi_i^{-1})) - 2/i.
\end{equation*}
for $ j \geqslant i \geqslant 1 $. Taking $ j \rightarrow \infty $, by the $ C^0 $ continuity of $ \tau_{m_k} $ we get
\begin{equation} \label{eq:Psi-Hofer-prelim-2}
\tau_{m_k} (\Phi(s) \varphi^{-1}) \geqslant 2s_k - 2^{-k} k \| s \|_\infty - (\sigma_{m_k}(\psi_i) + \sigma_{m_k}'(\psi_i^{-1})) - 2/i.
\end{equation}
But by the Calabi property stated in Theorem \ref{thm:Calabi-for-c0kB}, for each $ i \geqslant 1 $ we have
$$ \lim_{m \rightarrow \infty} \sigma_{m}(\psi_i) + \sigma_{m}'(\psi_i^{-1}) = \lim_{m \rightarrow \infty} c_{m,B_m}^0(\psi_i) + c_{m,B_m'}^0(\psi_i^{-1}) = 0 .$$
Hence from $(\ref{eq:Psi-Hofer-prelim-2})$, by taking $ k \rightarrow \infty $, we get
\begin{equation*} 
\limsup_{k \rightarrow \infty} \tau_{m_k} (\Phi(s) \varphi^{-1}) \geqslant 2 \limsup_{k \rightarrow \infty} s_k - 2/i
\end{equation*}
for every $ i \geqslant 1 $, and therefore we finally conclude that the inequality
\begin{equation} \label{eq:Psi-Hofer-prelim-3}
\limsup_{k \rightarrow \infty} \tau_{m_k} (\Phi(s) \varphi^{-1}) \geqslant 2 \limsup_{k \rightarrow \infty} s_k 
\end{equation}
holds for every $ s \in l^\infty $ and $ \varphi \in \Hameo(S^2) $. Now, we claim that given such $ s $ and $ \varphi $, we have
\begin{equation} \label{eq:Psi-Hofer-almost-done}
 \| \Phi(s) \varphi^{-1} \|_\cH \geqslant \limsup_{k \rightarrow \infty} s_k .
\end{equation}
Indeed, assume that we have a sequence $ \phi_i \in \Ham(S^2) $ that $ C^0 $ converges to $ \Phi(s) \varphi^{-1} $. Let $ \delta > 0 $ be arbitrary,
and then by $(\ref{eq:Psi-Hofer-prelim-3})$ we can find some $ k $ such that
$$ \tau_{m_k} (\Phi(s) \varphi^{-1}) \geqslant 2 \limsup_{k \rightarrow \infty} s_k - \delta .$$
But then by the $ C^0 $ continuity and by the Hofer Lipschitz property of $ \tau_{m_k} $ we get
$$ 2 \liminf_{i \rightarrow \infty} \| \phi_i \|_\H \geqslant \lim_{i \rightarrow \infty} \tau_{m_k} (\phi_i) = \tau_{m_k} (\Phi(s) \varphi^{-1}) \geqslant 2 \limsup_{k \rightarrow \infty} s_k - \delta ,$$
and since $ \delta > 0 $ is arbitrary, we conclude $(\ref{eq:Psi-Hofer-almost-done})$. It remains to notice that for a given $ s \in l^\infty $ and $ \varphi \in \Hameo(S^2) $, substituting $ -s $ and 
$ \varphi^{-1} $ into $(\ref{eq:Psi-Hofer-almost-done})$, by $ (\Phi(-s)\varphi)^{-1} = \varphi^{-1} \Phi(s) = \varphi^{-1} (\Phi(s) \varphi^{-1}) \varphi $ we get
\begin{equation*} 
\| \Phi(s) \varphi^{-1} \|_\cH = \| \Phi(-s) \varphi \|_\cH \geqslant \limsup_{k \rightarrow \infty} (-s_k) ,
\end{equation*}
and together with $(\ref{eq:Psi-Hofer-almost-done})$ this implies 
\begin{equation*} 
 \| \Phi(s) \varphi^{-1} \|_\cH \geqslant \limsup_{k \rightarrow \infty} | s_k | 
\end{equation*}
for every $ s \in l^\infty $ and $ \varphi \in \Hameo(S^2) $, which means that we have
$$  \| \pi \circ \Phi(s) \|_\cH \geqslant \limsup_{k \rightarrow \infty} | s_k | $$
for every $ s \in l^\infty $, and this finishes the proof of $(\ref{eq:quotient-emb-flats})$.

It remains to show that the image $ \Phi(c_0) $ is contained in $ \Hameo(S^2) $. This will be done under additional assumptions on the sequence $ m_k $ and on functions $ h_k' $ and $ h_k'' $.
Namely, we require that on step $ k $ we choose $ m_k $ to be large enough, and choose a sufficiently thin neigbourhood $ W_k $ of $ L_{m_k} \cup L_{m_k}' $ in $ \cA_{m_k} $, such that if for each $ \ell $ and each $ \eta \in (0,1] $ we consider the collection $ \cJ(\ell,\eta) $ consisting of all the indices $ j $ such that the disc $ \cD_{m_\ell,j} $ lies inside $ S^2 \cap \{ x_3 < -1/2 + \eta \} $ but does not intersect the union $ \cup_{k=1}^{\ell-1} \cup_{j=0}^{m_k-1} \phi_{m_k,j}(W_k) $, then for every $ \eta \in (0,1] $ we have
\begin{equation} \label{eq:Phi-s-Hameo-part-1}
 | \cJ(\ell,\eta) | > m_\ell \eta /2 
\end{equation}
when $ \ell $ is large enough (recall that the area of the spherical cap $ S^2 \cap \{ x_3 < -1/2 + \eta \} $ equals $ \eta $, and that the area of a disc $ \cD_{m} $ behaves asymptotically like $ 1/m $). This requirement can be easily met by controlling on each step $ \ell $ the area of the intersection of the union $ \cup_{k=1}^{\ell-1} \cup_{j=0}^{m_k-1} \phi_{m_k,j}(W_k) $ with $ S^2 \cap \{ x_3 < -1/2 + \eta \} $, for every $ \eta \in (0,1] $ (it is enough to have that area to be less than $ \frac{1}{2} \omega(S^2 \cap \{ x_3 < -1/2 + \eta \}) = \eta/2 $). Once the sequences $ (m_k) $ and $ (W_k) $ are chosen, we choose the functions $ h_k' $ and $ h_k'' $ as before but impose an additional requirement that their supports lie in $ W_k $. As before, we put $ h_k = h_k' - h_k'' $.

Now assume that we have some $ s \in c_0 $, and let us show that $ \Phi(s) \in \Hameo(S^2) $. We can find a sequence $ m_k' \leqslant m_k $ of natural numbers such that $ \lim_{k \rightarrow \infty} m_k'/m_k = 0 $ and at the same time the sequence $ s_k' := m_ks_k/m_k' $ converges to $ 0 $ when $ k \rightarrow \infty $. By $(\ref{eq:Phi-s-Hameo-part-1})$, it is possible to choose a sequence $ \eta_k \in (0,1] $ and find an index $ k_0 $ such that 
$ \lim_{k \rightarrow \infty} \eta_k = 0 $, and such that $ m_k' \leqslant | \cJ(k,\eta_k) | $ for $ k \geqslant k_0 $. For every $ k \geqslant k_0 $ choose a subset $ 0 \in J_k \subset  \cJ(k,\eta_k) $ that has exactly $ m_k' $ elements.

For each $ k \geqslant k_0 $, consider the autonomous Hamiltonian function 
$$ G_k := s_k' \sum_{j \in J_k} (\phi_{m_k,j})_* h_k .$$
By Lemma \ref{lemma:Sikorav-style}, and Remark \ref{remark:Sikorav-style} applied to the surface $ S^2 \cap \{ x_3 < -1/2 + \eta_k \} $, one can find a normalized Hamiltonian function $ H_k \in C^\infty(S^2 \times [0,1]) $ compactly supported in $ S^2 \cap \{ x_3 < -1/2+\eta_k \} $, such that $ \phi_{m_k s_k h_k}^1 = \phi_{m_k' s_k' h_k}^1 = \phi_{G_{k}}^1 (\phi_{H_k}^1)^{-1} $, and such that 
\begin{equation} \label{eq:Hofer-bound-on-H-k}
 \| H_k \| < \frac{3}{m_k}  .
\end{equation}

Since the supports of the $ G_k $'s are mutually disjoint and are ``converging'' to the south pole of $ S^2 $, and since $ \lim_{k \rightarrow \infty} \| G_k \|_\infty = 0 $, it follows that 
$$ \Psi := (C^0) \lim_{k \rightarrow \infty} \phi_{G_k}^1 \circ \phi_{G_{k-1}}^1 \circ \cdots \circ \phi_{G_{k_0}}^1 $$
is an element of $ \Hameo(S^2) $ being the time-$1$ map of a continuous flow of homeomorphisms generated by the continuous (autonomous) Hamiltonian $ \sum_{k=k_0}^{\infty} G_k $. Therefore it is enough to prove that $ \Psi^{-1} \circ \Phi(s) \in \Hameo(S^2) $,
which is equivalent to $ (\Phi(s^{k_0}))^{-1} \circ \Psi  \in \Hameo(S^2) $.
We have 
$$ (\Phi(s^{k_0}))^{-1} \circ \Psi  = (C^0) \lim_{k \rightarrow \infty} \Phi_k^{-1} \Psi_k ,$$
where 
$$ \Psi_k = \phi_{G_k}^1 \circ \phi_{G_{k-1}}^1 \circ \cdots \circ \phi_{G_{k_0}}^1 $$
and 
$$ \Phi_k = \phi_{m_k s_k h_k}^1 \circ \cdots \circ \phi_{m_{k_0} s_{k_0} h_{k_0}}^1 $$
for $ k \geqslant k_0 $. Denoting $ \Phi_{k_0-1} := \Id $, for each $ k \geqslant k_0 $ we have
\begin{equation*}
\begin{gathered}
 \Phi_k^{-1} \Psi_k = ( \Phi_{k-1}^{-1} (\phi_{m_{k} s_{k} h_{k}}^1)^{-1} \phi_{G_{k}}^1 \Phi_{k-1} ) \circ \cdots \circ ( \Phi_{k_0-1}^{-1} (\phi_{m_{k_0} s_{k_0} h_{k_0}}^1)^{-1} 
\phi_{G_{k_0}}^1 \Phi_{k_0-1} ) \\
= ( \Phi_{k-1}^{-1} \phi_{H_k}^1 \Phi_{k-1} ) \circ \cdots \circ ( \Phi_{k_0-1}^{-1} \phi_{H_{k_0}}^1 \Phi_{k_0-1} ).
\end{gathered}
\end{equation*}

Choose a smooth non-decreasing function $ c : [0,1] \rightarrow [0, 1] $ such that $ c(t) = 0 $ for $ t $ close to $ 0 $ and $ c(t) = 1 $ for $ t $ close to $ 1 $, and for each $ j $ define 
$ c_j : [\frac{j-1}{j},\frac{j}{j+1}] \rightarrow \R $ by $ c_j(t) = c((j+1)(jt-j+1)) $. Now, for each $ k \geqslant k_0 $, define the Hamiltonian function $ \widetilde H_k \in C^\infty(S^2 \times [0,1]) $ by $ \widetilde H_k(x,t) = 0 $ when $ t \in [0,\frac{k_0-1}{k_0}] \cup [\frac{k}{k+1},1] $, and 
\begin{equation*}
\begin{gathered}
\widetilde H_k(x,t) = c_j'(t)H_j(\Phi_{j-1}(x),c_j(t)) \\
= j(j+1)c'((j+1)(jt-j+1)) H_j (\Phi_{j-1}(x), c((j+1)(jt-j+1))) 
\end{gathered}
\end{equation*}
for $ t \in [\frac{j-1}{j} ,\frac{j}{j+1}] $ and $ k_0 \leqslant j \leqslant k $. By $(\ref{eq:Hofer-bound-on-H-k})$ and by the rapid growth of the sequence $ (m_k) $ (we have chosen the sequence so that in particular $ m_k > 2 m_{k-1} $ for each $ k > 1 $), the sequence $ (\widetilde H_k) $ converges in $ \| \cdot \| $ to a continuous function $ \widetilde H : S^2 \times [0,1] \rightarrow \R $. Since the sequence $ (\widetilde H_k) $ of Hamiltonian functions stabilizes on $ S^2 \times [0,t] $, for every $ t \in [0,1) $, the function $ \widetilde H $ is smooth on $ S^2 \times [0,1) $, hence the time-$t$ map $ \phi_{\widetilde H}^t $ is well defined for $ t \in [0,1) $. We claim the convergence
\begin{equation} \label{eq:big-flow}
(C^0) \lim_{t \rightarrow 1} \phi_{\widetilde H}^t = (\Phi(s^{k_0}))^{-1} \circ \Psi .
\end{equation}
To see this, first note that we have that convergence along the sequence $ t_k = \frac{k}{k+1} $, since $ \phi_{\widetilde H}^{t_k} =  \Phi_k^{-1} \Psi_k $ for $ k \geqslant k_0 $.
Therefore $(\ref{eq:big-flow})$ follows from the fact that for each $ k \geqslant k_0 $, the flow 
$$ \phi_{\widetilde H}^{t_{k-1}+\tau} \circ (\phi_{\widetilde H}^{t_{k-1}})^{-1} = \Phi_{k-1}^{-1} \phi_{H_k}^{c_k(t_{k-1}+\tau)} \Phi_{k-1}, $$
($ \tau \in [0,\frac{1}{k(k+1)}] $) converges to the identity when $ k \rightarrow \infty $, uniformly in $ \tau $. The latter holds since the Hamiltonian $ H_k $ is supported in the spherical cap $ S^2 \cap \{ x_3 < -1/2 + \eta_k \} $ which is invariant under $ \Phi_{k-1} $, and we have $ \lim_{k \rightarrow \infty} \eta_k = 0 $ (that is, that spherical cap ``shrinks'' into the south pole of $ S^2 $ when $ k $ converges to infinity). 

Because of $(\ref{eq:big-flow})$, if we define $ \phi_{\widetilde H}^1 := (\Phi(s^{k_0}))^{-1} \circ \Psi $, then $ (\phi_{\widetilde H}^t)_{t \in [0,1]} $ is a continuous path of homeomorphisms. But 
then $(\ref{eq:big-flow})$ also implies that $ (\phi_{\widetilde H}^t)_{t \in [0,1]} $ is the $ C^0 $ limit of the sequence $ (\phi_{\widetilde H_k}^t)_{t \in [0,1]} $ of smooth Hamiltonian flows (this can be readily seen from the definition of $ \widetilde H_k $). Recall that we have also verified that the sequence $ (\widetilde H_k) $ of Hamiltonians converges in the $ L^\infty $ norm (in particular in the 
$ L^{(1,\infty)} $ norm) to $ \widetilde H $. This shows that $ (\Phi(s^{k_0}))^{-1} \circ \Psi = \phi_{\widetilde H}^1 \in \Hameo(S^2) $, and hence finishes the proof.

\subsection{Proof of Lemma \ref{lemma:Sikorav-style}} \label{subsection:Sikorav-style-lemma-proof}

Our proof of the lemma is based on an idea of Sikorav \cite[Section 8.4]{Si}.

Denote $ \ell = \floor{\frac{m}{2}} $ and $ \ell' =  \floor{\frac{m-1}{2}} $. By slightly decreasing the disc $ \cD_0 $ and re-defining $ \cD_j := \phi_j(\cD_0) $ for all $ j $, we may assume that they all are topological discs with smooth boundaries and with mutually disjoint closures. We claim that there exists a Hamiltonian diffeomorphism $ \Psi \in \Ham(M,\omega) $ such that:
\begin{itemize}
 \item $ \Psi \circ \phi_{2i} = \phi_{2i+1} $ for all $ 0 \leqslant i \leqslant \ell' $ (here $ \phi_0 : \cD_0 \rightarrow \cD_0 $ stands for the identity map).
 \item $ \| \Psi \|_\H < \eps/2 $.
\end{itemize}
To see this, for each $ 0 \leqslant i \leqslant \ell' $ find a topological open disc $ \widehat \cD_i $ that compactly contains both $ \cD_{2i} $ and $ \cD_{2i+1} $, such that all the discs $ \widehat \cD_i $ are pairwise disjoint. It is well known that for each $ \widehat \cD_i $ we can find a Hamiltonian diffeomorphism $ \Psi_i \in \Ham(M,\omega) $ satisfying $ \Psi_i \circ \phi_{2i} = \phi_{2i+1} $, whose generating Hamiltonian is normalized, compactly supported in $ \widehat \cD_i $, and has $ L^{(1,\infty)} $ norm less than $ \eps/2 $. Now denote $ \Psi = \Psi_0 \cdots \Psi_\ell $. Similarly, we can find a Hamiltonian diffeomorphism $ \Psi' \in \Ham(M,\omega) $ such that:
\begin{itemize}
 \item $ \Psi' \circ \phi_{2i-1} = \phi_{2i} $ for all $ 1 \leqslant i \leqslant \ell $.
 \item $ \| \Psi' \|_\H < \eps/2 $.
\end{itemize}

For convenience we now introduce some $ g_0, g_1, \ldots, g_\ell \in \Ham(M,\omega) $ as follows. If $ m = 2 \ell + 1 $ is odd, then define $ g_i = f_{2i} \Psi^{-1} f_{2i+1} \Psi =  f_{2i} (\phi_{2i+1} \phi_{2i}^{-1})^* f_{2i+1} $ for $ 0 \leqslant i \leqslant \ell $. If $ m = 2 \ell $ is even, then we put $ g_i =  f_{2i} \Psi^{-1} f_{2i+1} \Psi =  f_{2i} (\phi_{2i+1} \phi_{2i}^{-1})^* f_{2i+1} $ for $ 0 \leqslant i \leqslant \ell-1 $, and moreover $ g_\ell = f_{2\ell} = f_m $. Then $ \supp(g_i) \subset \cD_{2i} $ for $ 0 \leqslant i \leqslant \ell $. Denote $ \widetilde \Phi := g_0 g_1 \ldots g_\ell $. Then we have 
$$ \Phi^{-1} \widetilde \Phi = \prod_{i=0}^{\ell'} (\Psi^{-1} f_{2i+1} \Psi  f_{2i+1}^{-1}) = \left (\prod_{i=0}^{\ell'} f_{2i+1} \right)^{-1} \Psi^{-1} \left( \prod_{i=0}^{\ell'} f_{2i+1} \right) \Psi ,$$
and hence 
\begin{equation} \label{eq:Hofer-est-1}
d_\H(\Phi,\widetilde \Phi) = \| \Phi^{-1} \widetilde \Phi \|_\H \leqslant 2 \| \Psi \|_\H < \eps . 
\end{equation}

Define $ \hat g_i := \phi_{2i}^*g_i $ for $ 0 \leqslant i \leqslant \ell $. Then put $ \hat h_i := \prod_{j=i}^\ell \hat g_j $ for $ 0 \leqslant i \leqslant \ell $, and then define $ h_{2i} = (\phi_{2i})_* \hat h_i $ for $ 0 \leqslant i \leqslant \ell $ and
$ h_{2i-1} = (\phi_{2i-1})_* \hat h_i^{-1} $ for $ 1 \leqslant i \leqslant \ell $. Put $ \widehat \Phi := h_0 h_1 \cdots h_{2\ell} $. Then 
$$ \widetilde \Phi^{-1} \widehat \Phi = \left( \prod_{i=1}^\ell h_{2i-1} \right) \Psi^{-1} \left( \prod_{i=1}^\ell h_{2i-1} \right)^{-1} \Psi $$
and hence 
\begin{equation} \label{eq:Hofer-est-2}
d_\H(\widetilde \Phi,\widehat \Phi) = \| \widetilde \Phi^{-1} \widehat \Phi \|_\H \leqslant 2 \| \Psi \|_\H < \eps . 
\end{equation}

Finally, note that $$ h_0 = \hat h_0 = \prod_{j=0}^\ell \hat g_j = \prod_{i=0}^m \phi_i^* f_i = \Phi' ,$$
therefore 
$$ (\Phi')^{-1} \widehat \Phi = h_0^{-1} \widehat \Phi = \prod_{i=1}^{2\ell} h_i = \left( \prod_{i=1}^\ell h_{2i-1} \right) \Psi' \left( \prod_{i=1}^\ell h_{2i-1} \right)^{-1} (\Psi')^{-1} $$
and so we get 
\begin{equation} \label{eq:Hofer-est-3}
d_\H(\widehat \Phi,\Phi') = \| (\Phi')^{-1} \widehat \Phi \|_\H \leqslant 2 \| \Psi' \|_\H < \eps . 
\end{equation}

Now, the inequalities $(\ref{eq:Hofer-est-1})$, $(\ref{eq:Hofer-est-2})$ and $(\ref{eq:Hofer-est-3})$ imply $(\ref{eq:statement-Sikorav-style})$, and this concludes the proof.

\subsection{Additional remarks}

\subsubsection{An adaptation of Lemma \ref{lemma:Sikorav-style} to $ \widetilde \Ham(M,\omega) $} \label{subsubsection:Sikorav-for-universal-cover}
Lemma \ref{lemma:Sikorav-style} holds also for the universal cover $ \widetilde \Ham(M,\omega) $. More precisely, let $ (M,\omega) $, $ \eps > 0 $, discs $ \cD_0, \ldots, \cD_m \subset M $ and symplectic diffeomorphisms $ \phi_j : \cD_0 \rightarrow \cD_j $ be as in the statement of Lemma \ref{lemma:Sikorav-style}. Furthermore, let $ (f_j^t)_{t \in [0,1]} $ be Hamiltonian flows on $ M $, ($ 0 \leqslant j \leqslant m) $, such that the flow $ (f_j^t) $ is 
compactly supported in $ \cD_j $ for every $ j $. Define the Hamiltonian flows $ (\Phi_t) $, $ (\Phi_t') $ by 
$$ \Phi_t = f_0^t f_1^t \cdots f_m^t, $$ $$ \Phi_t' = f_0^t \Pi_{j=1}^m \phi_j^* f_j^t  .$$ Then for the representatives $ [(\Phi_t)],[(\Phi_t')] \in \widetilde \Ham(M,\omega) $ of the flows $ (\Phi_t), (\Phi_t') $ we have
\begin{equation} \label{eq:statement-Sikorav-style-ucover}
d_\H([(\Phi_t)],[(\Phi_t')]) < 3 \eps .
\end{equation}

Our proof of Lemma \ref{lemma:Sikorav-style} transfers almost verbatim to that case. As before, we pick a Hamiltonian diffeomorphisms $ \Psi $ and $ \Psi' $ with the same properties, and define 
flows $ (g_i^t) $ in an analogous way. Namely, if $ m = 2 \ell + 1 $ is odd then $ g_i^t = f_{2i}^t \Psi^{-1} f_{2i+1}^t \Psi $ for $ 0 \leqslant i \leqslant \ell $, and if $ m = 2 \ell $ is even then 
$ g_i^t =  f_{2i}^t \Psi^{-1} f_{2i+1}^t \Psi $ for $ 0 \leqslant i \leqslant \ell-1 $ and moreover $ g_\ell^t = f_{2\ell}^t = f_m^t $. Then we define the flow $ \widetilde \Phi_t := g_0^t g_1^t \cdots g_\ell^t $, and we get  
$$ (\Phi_t)^{-1} \widetilde \Phi_t = \left (\prod_{i=0}^{\ell'} f_{2i+1}^t \right)^{-1} \Psi^{-1} \left( \prod_{i=0}^{\ell'} f_{2i+1}^t \right) \Psi .$$
At this point we wish to conclude that we have 
\begin{equation}  \label{eq:Hofer-est-fcov}
d_\H([(\Phi_t)],[(\widetilde \Phi_t)]) = \| [((\Phi_t)^{-1} \widetilde \Phi_t)] \|_\H < \eps . 
\end{equation}
Let us explain why we are able to do that. Denote
$$ \Theta_t = \prod_{i=0}^{\ell'} f_{2i+1}^t $$
and choose a Hamiltonian flow $ (\Psi_t) $ of Hofer length less than $ \eps/2 $, such that $ \Psi_1 = \Psi $. Then the Hofer length of the Hamiltonian flow $ (\Theta_1^{-1} \Psi_t^{-1} \Theta_1 \Psi_t)_{t \in [0,1]} $ is less than $ \eps $. Hence in order to show the inequality 
$$ d_\H([(\Phi_t)],[(\widetilde \Phi_t)]) = \| [((\Phi_t)^{-1} \widetilde \Phi_t)] \|_\H = \| [(\Theta_t^{-1} \Psi_1^{-1} \Theta_t \Psi_1)] \|_H < \eps $$ it would be enough to verify that $$ [(\Theta_t^{-1} \Psi_1^{-1} \Theta_t \Psi_1)] = [(\Theta_1^{-1} \Psi_t^{-1} \Theta_1 \Psi_t)] \in \widetilde \Ham(M,\omega) .$$
But that is a general fact which holds for universal covers of Lie groups, and it follows from looking at the homotopy $ F_{s,t} := \Theta_t^{-1} \Psi_s^{-1} \Theta_t \Psi_s $, and from observing that
$ F_{s,0} = F_{0,t} = \Id $. 

We have shown the inequality $(\ref{eq:Hofer-est-fcov})$. Then, in a completely analogous way, we define the flows $ (h_i^t) $ and then define the flow $ (\widehat \Phi_t) $, showing that 
$ d_\H([(\widetilde \Phi_t)],([(\widehat \Phi_t)])  < \eps $ and $ d_\H([(\widehat \Phi_t)],([(\Phi_t')])  < \eps $, and this implies $(\ref{eq:statement-Sikorav-style-ucover})$, finishing the proof.

\subsubsection{Some properties of $ l^\infty / c_0 $} \label{subsubsection:Some-properties}

Let us explain how one can isometrically embed the normed group $ (l^\infty ,  \| \cdot \|_\infty) $ into the normed group $ (l^\infty / c_0 ,  \| \cdot \|_\infty) $, and moreover isometrically embed 
$ (C(X), \| \cdot \|_\infty) $ into $ (l^\infty ,  \| \cdot \|_\infty) $ (for a separable topological space $ X $). Decompose the set of indices $ k \in \mathbb{N} $ into a countable union of countable sets: $ \mathbb{N} = I_1 \cup I_2 \cup \cdots $. Then define the map $ \alpha: l^\infty \rightarrow l^\infty $ by $ \alpha((s_k)) = (s_i') $ where $ s_i' = s_k $ when $ i \in I_k $. The composition of $ \alpha $ with the natural homomorphism $ l^\infty \rightarrow l^\infty / c_0 $ is an isometric embedding of $ l^\infty $ into $ l^\infty / c_0 $. Now, if $ X $ is a separable topological space and $ (x_i)_{i \in \mathbb{N}} $ is a dense sequence in $ X $, then the map $ C(X) \rightarrow l^\infty $ which sends $ f \in C(X) $ to the sequence $ (f(x_i))_{i\in\mathbb{N}} $, is an isometric embedding. 

Now we explain why there is no isometric group embedding of $ (l^\infty / c_0 ,  \| \cdot \|_\infty) $ into $ (l^\infty ,  \| \cdot \|_\infty) $. Assume on the contrary that such an embedding $ \iota : l^\infty / c_0 \rightarrow l^\infty $ exists. Consider the set $ J $ consisting of all infinite sequences $ \alpha = (\alpha_0,\alpha_1, \ldots) $ where $ \alpha_i \in \{1,2 \} $ for each $ i $. For every $ \alpha \in J $ define a subset $ I_\alpha $ consisting of all integers of the form $ \sum_{i=0}^k \alpha_i 3^i $ for all $ k \geqslant 0 $, and then define $ s^\alpha = [(s^\alpha_j)] \in l^\infty / c_0 $ where $ s^\alpha_j = 1 $ for $ j \in I_\alpha $ and $ s^\alpha_j = 0 $ otherwise. For every distinct $ \alpha, \beta \in J $, the intersection $ I_\alpha \cap I_\beta $ is finite. Therefore for every $ \alpha_1, \ldots, \alpha_m \in J $ and $ t_1,\ldots,t_m \in \R $ we have 
$$ \| t_1 s^{\alpha_1} + \ldots + t_m s^{\alpha_m} \|_\infty = \max_\ell | t_\ell | .$$
Now for each pair of integers $ i, k \geqslant 1 $ consider the subset $ J_{i,k} \subset J $ that consists of all $ \alpha \in J $ for which the absolute value of the $ i $-th coordinate of $ \iota(s^\alpha) \in l^\infty $ is greater than $ 1/k $. Then $ J_{i,k} $ is a finite set of cardinality $ | J_{i,k} | < k $. Indeed, otherwise taking some distinct $ \alpha_1, \ldots, \alpha_k \in J_{i,k} $, for a suitable choice of $ \epsilon_i \in \{ -1,1 \} $, the $ i $-th coordinate of $ \sum_{j=1}^k \epsilon_j \iota(s^{\alpha_j}) $ is greater than $ 1 $ and consequently 
$$ \| \iota( \sum_{j=1}^k \epsilon_j s^{\alpha_j} ) \|_\infty = \| \sum_{j=1}^k \epsilon_j \iota(s^{\alpha_j}) \|_\infty > 1 ,$$
which contradicts $ \| \sum_{j=1}^k \epsilon_j s^{\alpha_j}  \|_\infty = 1 $. This shows that $ J_{i,k} $ is a finite set and hence the union $ \cup_{i,k} J_{i,k} $ is countable. Since $ J $ is uncountable we therefore can find some $ \alpha \in J $ that does not belong to any of $ J_{i,k} $, which means that $ s^\alpha $ belongs to the kernel of $ \iota $, and so the kernel is non-trivial as stated. A similar argument shows that every continuous group homomorphism between $ (l^\infty / c_0 ,  \| \cdot \|_\infty) $ and $ (l^\infty ,  \| \cdot \|_\infty) $ has a non-trivial kernel (in fact, such a homomorphism is always linear over $ \R $ and the kernel must be an infinite dimensional linear subspace of $ l^\infty / c_0 $).

\bigskip
\noindent Lev Buhovski\\
School of Mathematical Sciences, Tel Aviv University \\
{\it e-mail}: levbuh@tauex.tau.ac.il
\bigskip


\begin{thebibliography}{999999}

 \bibitem{CHS-1}
   D. Cristofaro-Gardiner, V. Humili\`ere, and S. Seyfaddini, {\em Proof of the simplicity conjecture}. arXiv:2001.01792, 2020.

 \bibitem{CHMSS}
 D. Cristofaro-Gardiner, V. Humili\`ere, C. Y. Mak, S. Seyfaddini, and I. Smith, {\em Quantitative Heegaard Floer cohomology and the Calabi invariant}. arXiv:2105.11026, 2021.

 \bibitem{CHS-2} 
  D. Cristofaro-Gardiner, V. Humili\`ere, and S. Seyfaddini, {\em PFH spectral invariants on the two-sphere
and the large scale geometry of Hofer’s metric}. arXiv:2102.04404, 2021.


 \bibitem{F}
  A. Fathi, {\em Structure of the group of homeomorphisms preserving a good measure on a compact manifold}. Ann. Sci. \'Ecole Norm. Sup. (4), 13(1):45--93, 1980.
 
 \bibitem{L}
 F. Le Roux, {\em Six questions, a proposition and two pictures on Hofer distance for Hamiltonian diffeomorphisms on surfaces}. Proceedings of the 2007 AMS-SIAM Summer Research Conference ``Symplectic Topology and Measure preserving Dynamical Systems'', Snowbird, UT, Contemporary Math., 2010.
 
 \bibitem{MS}
C.-Y. Mak and I. Smith, {\em Non-displaceable Lagrangian links in four-manifolds}. Geom. Funct. Anal. Vol. 31 (2021) 438--481.
 
 \bibitem{Mu}
 S. M\"uller, {\em The group of Hamiltonian homeomorphisms in the $ L^\infty $-norm}, J. Korean Math. Soc. 45 (2008), 1769--1784.

 \bibitem{OM}
 Y.-G. Oh and S. M\"uller, {\em The group of Hamiltonian homeomorphisms and $C^0$--symplectic topology}. J. Symplectic Geom., 5(2):167--219, 2007.

 \bibitem{PS}
 L. Polterovich, E. Shelukhin, {\em Lagrangian configurations and Hamiltonian maps}. arXiv:2102.06118, 2021.
 
 \bibitem{Si}
 J.-C. Sikorav, {\em Syst\`emes hamiltoniens et topologie symplectique}, ETS Editrice Pisa, 1990.
 
 
\end{thebibliography}
\end{document}